\theoremstyle{plain}
\newtheorem{theorem}{Theorem}[section]
\newtheorem{proposition}[theorem]{Proposition}
\newtheorem{corollary}[theorem]{Corollary}
\theoremstyle{definition}
\numberwithin{equation}{section}
\DeclareMathOperator{\L-spec}{L-Spec}
\DeclareMathOperator{\cent}{Cent}
\newcommand{\bnum}{\begin{enumerate}}
\newcommand{\enum}{\end{enumerate}}
\begin{document}

\title{On Laplacian energy of non-commuting graphs of  finite groups }
\author{Parama Dutta and Rajat Kanti Nath\footnote{Corresponding author}}
\date{}
\maketitle
\begin{center}\small{ Department of Mathematical Sciences,\\ Tezpur
University,  Napaam-784028, Sonitpur, Assam, India.\\
Emails:   parama@gonitsora.com and  rajatkantinath@yahoo.com}
\end{center}


\smallskip

\noindent {\small{\textbf{Abstract:}  In this paper, we compute Laplacian energy of the non-commuting graphs of some classes of finite non-abelian groups.
}}

\bigskip

\noindent \small{\textbf{\textit{Key words:}} non-commuting graph, spectrum, L-integral graph, finite group.}

\noindent \small{\textbf{\textit{2010 Mathematics Subject Classification:}} 05C50, 15A18, 05C25, 20D60.}

\section{Introduction} \label{S:intro}
Let ${\mathcal{G}}$ be a graph. Let $A({\mathcal{G}})$ and $D({\mathcal{G}})$ denote the adjacency matrix  and degree matrix of the graph respectively. Then the Laplacian matrix  of ${\mathcal{G}}$ is given by  $L({\mathcal{G}})  = D({\mathcal{G}}) - A({\mathcal{G}})$. Let  $\beta_1,  \beta_2, \dots, \beta_m$ be the eigenvalues of  $L({\mathcal{G}})$ with multiplicities $b_1, b_2, \dots, b_m$. Then the Laplacian spectrum of  ${\mathcal{G}}$, denoted by $\L-spec({\mathcal{G}})$, is   the set $\{\beta_1^{b_1}, \beta_2^{b_2}, \dots, \beta_m^{b_m}\}$. The Laplacian energy of ${\mathcal{G}}$,   denoted by $LE({\mathcal{G}})$,   is given by

\begin{equation}\label{Lenergy}
LE({\mathcal{G}}) = \sum_{\mu \in \L-spec({\mathcal{G}})}\left|\mu - \frac{2|e({\mathcal{G}})|}{|v({\mathcal{G}})|}\right| 
\end{equation}
where $v({\mathcal{G}})$ and $e({\mathcal{G}})$ are the sets of  vertices    and edges   of the graph $\mathcal{G}$ respectively.  A graph $\mathcal{G}$ is called L-integral if  $\L-spec({\mathcal{G}})$ contains only integers. Various properties of L-integral graphs and $LE({\mathcal{G}})$ are studied in \cite{Abreu08,Kirkland07,Merries94}.

Let $G$ be a finite non-abelian group with center $Z(G)$. The non-commuting graph of  $G$, denoted by ${\mathcal{A}}_G$  is a simple undirected graph such that $v({\mathcal{A}}_G) = G\setminus Z(G)$ and two vertices $x$ and $y$ are adjacent if and only if $xy \ne yx$. Various aspects of non-commuting graphs of different families of finite non-abelian groups are studied  in  \cite{Ab06,AF14,dbb10,Abd13,tal08}. Note that the complement of   ${\mathcal{A}}_G$ is the commuting graph of  $G$ denoted by ${\overline{\mathcal{A}}}_G$. Commuting graphs are studied extensively in \cite{amr06,iJ07,mP13,par13}. In \cite{lspac}, the authors have computed the Laplacian  spectrum of the non-commuting graphs of  several  well-known families finite non-abelian groups. In this paper we study the Laplacian energy of those classes of finite groups.

\section{Some Computations}
In this section, we compute Laplacian energy of some families of groups whose central factors are some well-known  groups.
\begin{theorem} \label{order-20}
Let $G$ be a finite group and $\frac{G}{Z(G)} \cong Sz(2)$, where $Sz(2)$ is the Suzuki group presented by $\langle a, b : a^5 = b^4 = 1, b^{-1}ab = a^2 \rangle$. Then

\[
LE({\mathcal{A}}_G) = \left(\frac{120}{19}|Z(G)| + 30\right)|Z(G)|.
\]
\end{theorem}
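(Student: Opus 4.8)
The plan is to reduce everything to a concrete, fully-determined Laplacian spectrum and then evaluate the defining sum in equation (1.1). The key structural fact is that the non-commuting graph $\mathcal{A}_G$ depends only on the sizes of the vertex set and the centralizer classes. Since $G/Z(G) \cong Sz(2)$, a group of order $20$, we have $|G| = 20|Z(G)|$, so $|v(\mathcal{A}_G)| = |G| - |Z(G)| = 19|Z(G)|$. The Suzuki group $Sz(2)$ is a Frobenius group of order $20$ whose centralizer structure is well understood: the quotient has a cyclic normal subgroup of order $5$ and cyclic complements of order $4$. Lifting this back to $G$, the non-central elements split into centralizer classes according to their images, and each non-central element $x$ is non-adjacent precisely to the $|C_G(x)| - 1$ other non-central elements commuting with it (together forming, with the center, the centralizer $C_G(x)$). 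This makes $\mathcal{A}_G$ a complete multipartite graph whose parts are the sets $C_G(x) \setminus Z(G)$.

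First I would record, citing the companion paper \cite{lspac}, the explicit Laplacian spectrum $\L-spec(\mathcal{A}_G)$ for this class of groups; this is the real engine of the proof and is presumably stated there as the sizes and multiplicities of the parts. For a complete multipartite graph with parts of sizes $n_1, \dots, n_k$ on $N$ vertices, the Laplacian eigenvalues are $0$ once, $N$ with multiplicity $k-1$, and $N - n_i$ with multiplicity $n_i - 1$ for each part. Here, using $|C_G(x)|$ determined by the Suzuki structure, the parts have sizes that are fixed multiples of $|Z(G)|$ (coming from the orders $4|Z(G)|$, $5|Z(G)|$, etc.\ of the centralizers minus $|Z(G)|$), so the entire spectrum is a short list of values of the form $(\text{integer})\cdot|Z(G)|$ with explicit multiplicities. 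I would simply transcribe this list.

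Next I would compute the averaging term $\frac{2|e(\mathcal{A}_G)|}{|v(\mathcal{A}_G)|}$. Since the sum of all Laplacian eigenvalues equals $2|e(\mathcal{A}_G)|$, and the number of vertices is $19|Z(G)|$, this average equals $\frac{1}{19|Z(G)|}\sum_{\mu}\mu$, which I can read off directly from the spectrum; I expect it to come out to something like $\frac{\text{const}}{19}|Z(G)|$, explaining the denominator $19$ in the stated answer. With both the spectrum and the average in hand, the final step is the purely arithmetic evaluation of $LE(\mathcal{A}_G) = \sum_{\mu}\bigl|\mu - \frac{2|e|}{|v|}\bigr|$, grouping the eigenvalues by their common values and weighting by multiplicity.

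The main obstacle, and the only place requiring genuine care, will be the sign bookkeeping in the absolute-value sum: one must determine for each distinct eigenvalue whether it lies above or below the average $\frac{2|e|}{|v|}$ before removing the absolute values, and this comparison may depend mildly on $|Z(G)|$. I would handle this by comparing each eigenvalue value to the average as functions of $|Z(G)|$, checking that the sign pattern is constant for all $|Z(G)| \ge 1$, and only then collapsing the sum. Once the signs are fixed, the remaining computation is a routine linear combination in $|Z(G)|$ that I expect to simplify to the claimed $\left(\frac{120}{19}|Z(G)| + 30\right)|Z(G)|$.
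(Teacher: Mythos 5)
Your proposal is correct and follows essentially the same route as the paper: both determine the centralizer structure forced by $G/Z(G)\cong Sz(2)$ (one abelian centralizer of order $5|Z(G)|$ and five of order $4|Z(G)|$), identify $\mathcal{A}_G$ as the complete multipartite graph with parts $C_G(x)\setminus Z(G)$ of sizes $4|Z(G)|$ and $3|Z(G)|$ (five times), take the Laplacian spectrum $\{0,\,(15|Z(G)|)^{4|Z(G)|-1},\,(16|Z(G)|)^{15|Z(G)|-5},\,(19|Z(G)|)^{5}\}$ from the companion paper, and evaluate the deviation sum about the average $\frac{300}{19}|Z(G)|$. Your only departures are cosmetic — obtaining $2|e(\mathcal{A}_G)|$ from the eigenvalue sum rather than counting edges in $\overline{\mathcal{A}}_G = K_{4|Z(G)|}\sqcup 5K_{3|Z(G)|}$, and noting that the sign pattern under the absolute values is independent of $|Z(G)|$ because every eigenvalue and the mean scale linearly in $|Z(G)|$ — and the final arithmetic indeed collapses to $\left(\frac{120}{19}|Z(G)|+30\right)|Z(G)|$.
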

\begin{proof}
It is clear that $|v({\mathcal{A}}_G)| = 19|Z(G)|$. Since $\frac{G}{Z(G)} \cong Sz(2)$, we have
\[
\frac{G}{Z(G)} = \langle aZ(G), bZ(G) : a^5Z(G) = b^4Z(G) = Z(G), b^{-1}abZ(G) = a^2Z(G) \rangle.
\]
Then
\[
\begin{array}{ll}
C_G(a)  &= Z(G)\sqcup aZ(G) \sqcup a^2Z(G)\sqcup a^3Z(G)\sqcup a^4Z(G),\\
C_G(ab) &= Z(G)\sqcup abZ(G) \sqcup a^4b^2Z(G)\sqcup a^3b^3Z(G),\\
C_G(a^2b) &= Z(G)\sqcup a^2bZ(G) \sqcup a^3b^2Z(G)\sqcup ab^3Z(G),\\
C_G(a^2b^3) &= Z(G)\sqcup a^2b^3Z(G) \sqcup ab^2Z(G)\sqcup a^4bZ(G),\\
C_G(b) &= Z(G)\sqcup bZ(G) \sqcup b^2Z(G)\sqcup b^3Z(G) \quad \text{ and }\\
C_G(a^3b) &= Z(G)\sqcup a^3bZ(G) \sqcup a^2b^2Z(G)\sqcup a^4b^3Z(G)
\end{array}
\]
are the only centralizers of non-central elements of $G$. Since all these distinct centralizers are abelian, we have
\[
{\overline{{\mathcal{A}}}_G} = K_{4|Z(G)|}\sqcup 5K_{3|Z(G)|}
\]
and hence $|e({\mathcal{A}}_G)| = 150{|Z(G)|}^2$. By Theorem 3.1 of \cite{lspac}, we have
\[
\L-spec({\mathcal{A}}_G) =\{  0,  {(15|Z(G)|)}^{4|Z(G)|-1},  {(16|Z(G)|)}^{15|Z(G)|-5},  {(19|Z(G)|)}^5 \}.
\]
Therefore, $\left|0 - \frac{2|e({\mathcal{A}}_G)|}{|v({\mathcal{A}}_G)|}\right| = \frac{300}{19}|Z(G)|$, $\left|15|Z(G)| - \frac{2|e({\mathcal{A}}_G)|}{|v({\mathcal{A}}_G)|}\right| = \frac{15}{19}|Z(G)|$,\\ $\left|16|Z(G)| - \frac{2|e({\mathcal{A}}_G)|}{|v({\mathcal{A}}_G)|}\right| = \frac{4}{19}|Z(G)|$, $\left|19|Z(G)| - \frac{2|e({\mathcal{A}}_G)|}{|v({\mathcal{A}}_G)|}\right| = \frac{61}{19}|Z(G)|$. By \eqref{Lenergy}, we have
\begin{align*}
  LE({\mathcal{A}}_G) = & \frac{300}{19}|Z(G)| + (4|Z(G)|-1)\left(\frac{15}{19}|Z(G)|\right) + (15|Z(G)|-5)\left(\frac{4}{19}|Z(G)|\right)\\
   & + 5\left(\frac{61}{19}|Z(G)|\right).
\end{align*}
Hence the result follows.

\end{proof}

\begin{theorem}\label{main2}
Let $G$ be a finite group such that $\frac{G}{Z(G)} \cong {\mathbb{Z}}_p \times {\mathbb{Z}}_p$, where $p$ is a prime integer. Then

\[
LE({\mathcal{A}}_G) = 2p(p - 1)|Z(G)|.
\]

\end{theorem}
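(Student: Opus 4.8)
The plan is to follow the template of Theorem~\ref{order-20}: first pin down the structure of the commuting graph $\overline{\mathcal{A}}_G$ as a disjoint union of complete graphs, then read off the Laplacian spectrum from the companion result in \cite{lspac}, and finally evaluate the sum in \eqref{Lenergy}. Since $\frac{G}{Z(G)} \cong \mathbb{Z}_p \times \mathbb{Z}_p$ has order $p^2$, we have $|G| = p^2|Z(G)|$, and hence $|v(\mathcal{A}_G)| = (p^2 - 1)|Z(G)|$.

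The crux is to describe the centralizers of non-central elements. For a non-central $x$, the image $xZ(G)$ is a nontrivial element of $\frac{G}{Z(G)}$, so $C_G(x)/Z(G)$ is a proper subgroup of $\mathbb{Z}_p \times \mathbb{Z}_p$ containing it; since the proper nontrivial subgroups of $\mathbb{Z}_p \times \mathbb{Z}_p$ all have order $p$ and there are exactly $p+1$ of them (the lines of the plane $\mathbb{F}_p^2$), I would conclude that there are exactly $p+1$ distinct centralizers, each of order $p|Z(G)|$, pairwise intersecting in $Z(G)$. Conversely, choosing $x$ whose image generates a given line recovers that subgroup as $C_G(x)$, so each line does occur. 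I would next argue each such centralizer is abelian: writing $C = C_G(x)$, we have $Z(G) \subseteq Z(C)$, and since $[C : Z(G)] = p$ is prime, either $Z(C) = C$ or $C/Z(C)$ is cyclic of order $p$; the latter forces $C$ abelian as well, so in either case $C$ is abelian.

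With this in hand, two non-central elements commute precisely when they lie in a common centralizer, and because distinct lines of $\mathbb{F}_p^2$ meet only at the origin, the non-central parts of the $p+1$ centralizers partition $G \setminus Z(G)$ into $p+1$ blocks of size $(p-1)|Z(G)|$ each. Hence $\overline{\mathcal{A}}_G = (p+1)K_{(p-1)|Z(G)|}$. A vertex in one block is joined in $\mathcal{A}_G$ to every vertex outside its block, so $\mathcal{A}_G$ is regular of degree $(p^2-1)|Z(G)| - (p-1)|Z(G)| = p(p-1)|Z(G)|$; consequently $\frac{2|e(\mathcal{A}_G)|}{|v(\mathcal{A}_G)|} = p(p-1)|Z(G)|$.

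Finally, invoking the relevant case of Theorem~3.1 of \cite{lspac}, I would record
\[
\L-spec(\mathcal{A}_G) = \{0, (p(p-1)|Z(G)|)^{(p+1)((p-1)|Z(G)|-1)}, ((p^2-1)|Z(G)|)^{p}\},
\]
whose multiplicities sum to $(p^2-1)|Z(G)|$ as a check, and then compute the three deviations from the mean degree $p(p-1)|Z(G)|$: the eigenvalue $0$ contributes $p(p-1)|Z(G)|$, the eigenvalue $p(p-1)|Z(G)|$ contributes $0$, and the eigenvalue $(p^2-1)|Z(G)|$ contributes $(p-1)|Z(G)|$ with multiplicity $p$. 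Substituting into \eqref{Lenergy} gives $p(p-1)|Z(G)| + p\,(p-1)|Z(G)| = 2p(p-1)|Z(G)|$, as claimed. The main obstacle is the centralizer analysis of the second paragraph, in particular verifying that each centralizer is abelian and that there are exactly $p+1$ of them; once the commuting graph is identified as $(p+1)K_{(p-1)|Z(G)|}$, the spectral and arithmetic steps are routine.
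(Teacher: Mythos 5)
Your proposal is correct and follows essentially the same route as the paper: identify the $p+1$ abelian centralizers of non-central elements, conclude $\overline{\mathcal{A}}_G = (p+1)K_{(p-1)|Z(G)|}$, import the Laplacian spectrum from \cite{lspac}, and evaluate \eqref{Lenergy}, arriving at the same spectrum (your multiplicity $(p+1)((p-1)|Z(G)|-1)$ equals the paper's $(p^2-1)|Z(G)|-p-1$) and the same final sum. The only differences are cosmetic: you derive the centralizer structure abstractly via the $p+1$ lines of $\mathbb{F}_p^2$ together with a $C/Z(C)$-cyclic argument for abelianity, where the paper lists the centralizers explicitly in terms of generators $a$ and $a^jb$, and the spectrum is actually Theorem 3.2 (not 3.1) of \cite{lspac}.
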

\begin{proof}
It is clear that $|v({\mathcal{A}}_G)| = (p^2 - 1)|Z(G)|$. Since $\frac{G}{Z(G)} \cong \mathbb{Z}_p\times \mathbb{Z}_p$, we have
$
\frac{G}{Z(G)} = \langle aZ(G), bZ(G) : a^p, b^p, aba^{-1}b^{-1} \in Z(G)\rangle,
$
where $a$, $b\in$ $G$ with $ab\neq ba$. Then for any $z\in Z(G)$
\begin{align*}
  C_G(a) & = Z(G)\sqcup aZ(G)\sqcup \dots \sqcup a^{p - 1}Z(G) \text{ for } 1\leq i\leq p - 1 \quad \text{and} \\
  C_G(a^jb) &= Z(G)\sqcup a^jbZ(G)\sqcup \dots \sqcup {a^jb}^{p - 1}Z(G) \text{ for } 1\leq j\leq p
\end{align*}
are the only centralizers of non-central elements of $G$. Also note that these
centralizers are abelian subgroups of $G$. Therefore
\[
{\overline{{\mathcal{A}}}_G} = K_{|C_G(a)\setminus Z(G)|}\sqcup (\underset{j = 1}{\overset{p}{\sqcup}} K_{|C_G(a) \setminus Z(G)|}).
\]
Since, $|C_G(a)|$ and  $|C_G(a^jb)|$ for $1\leq j\leq p$, we have

\[
{\overline{{\mathcal{A}}}_G} = (p + 1)K_{(p - 1)|Z(G)|}.
\]
and hence $|e({\mathcal{A}}_G)| = \frac{p(p + 1){(p - 1)}^2}{2}{|Z(G)|}^2$. By Theorem 3.2 of \cite{lspac}, we have
\[
\L-spec({\mathcal{A}}_G) = \{ 0,  {((p^2-p)|Z(G)|)}^{(p^2-1)|Z(G)|-p-1},  {((p^2-1)|Z(G)|)}^p \}.
\]
Therefore, $\left|0 - \frac{2|e({\mathcal{A}}_G)|}{|v({\mathcal{A}}_G)|}\right| = p(p - 1)|Z(G)|$, $\left|(p^2-p)|Z(G)| - \frac{2|e({\mathcal{A}}_G)|}{|v({\mathcal{A}}_G)|}\right| = 0$ and \\ $\left|(p^2-1)|Z(G)| - \frac{2|e({\mathcal{A}}_G)|}{|v({\mathcal{A}}_G)|}\right| = (p - 1)|Z(G)|$.  By \eqref{Lenergy}, we have
\[
LE({\mathcal{A}}_G) = p(p - 1)|Z(G)| + ((p^2-1)|Z(G)|-p-1)0 + p((p - 1)|Z(G)|).
\]
Hence the result follows.

\end{proof}

\begin{corollary}
Let $G$ be a non-abelian group of order $p^3$, for any prime $p$, then
\[
LE({\mathcal{A}}_G) = 2p^2(p - 1).
\]
\end{corollary}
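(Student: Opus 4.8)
The plan is to deduce this corollary directly from Theorem \ref{main2} by verifying that any non-abelian group $G$ of order $p^3$ satisfies the hypothesis $\frac{G}{Z(G)} \cong {\mathbb{Z}}_p \times {\mathbb{Z}}_p$, and then substituting the appropriate value of $|Z(G)|$ into the formula $LE({\mathcal{A}}_G) = 2p(p-1)|Z(G)|$.

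First I would pin down the order of the center. For a non-abelian $p$-group of order $p^3$, a standard result says $Z(G)$ is nontrivial, so $|Z(G)| \in \{p, p^2, p^3\}$. Since $G$ is non-abelian, $|Z(G)| \ne p^3$. Moreover $|Z(G)| \ne p^2$, because if $[G:Z(G)] = p$ then $\frac{G}{Z(G)}$ would be cyclic, forcing $G$ to be abelian, a contradiction. Hence $|Z(G)| = p$.

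Next I would identify the central quotient. With $|Z(G)| = p$ we have $\left|\frac{G}{Z(G)}\right| = p^2$, so $\frac{G}{Z(G)}$ is a group of order $p^2$ and is therefore either cyclic or isomorphic to ${\mathbb{Z}}_p \times {\mathbb{Z}}_p$. The cyclic case is again excluded by the same argument that a cyclic central quotient forces commutativity. Therefore $\frac{G}{Z(G)} \cong {\mathbb{Z}}_p \times {\mathbb{Z}}_p$, so the hypothesis of Theorem \ref{main2} holds.

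Finally I would substitute. Applying Theorem \ref{main2} gives $LE({\mathcal{A}}_G) = 2p(p-1)|Z(G)|$, and plugging in $|Z(G)| = p$ yields $LE({\mathcal{A}}_G) = 2p^2(p-1)$, as claimed. The only mildly delicate point is the repeated appeal to the fact that a cyclic central quotient forces a group to be abelian; everything else is a routine substitution, so I anticipate no real obstacle here.
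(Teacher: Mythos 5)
Your proposal is correct and follows exactly the paper's route: the paper also notes $|Z(G)| = p$ and $\frac{G}{Z(G)} \cong {\mathbb{Z}}_p \times {\mathbb{Z}}_p$ and then invokes Theorem \ref{main2}, simply leaving implicit the standard facts (nontrivial center of a $p$-group, cyclic central quotient forces abelian) that you spell out. Your additional detail is sound but does not constitute a different argument.
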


\begin{proof}
Note that $|Z(G)| = p$ and  $\frac{G}{Z(G)} \cong {\mathbb{Z}}_p \times {\mathbb{Z}}_p$. Hence the  result follows from Theorem \ref{main2}.
\end{proof}

\begin{theorem}\label{main4}
Let $G$ be a finite group such that $\frac{G}{Z(G)} \cong D_{2m}$, for $m \geq 2$. Then
\[
LE({\mathcal{A}}_G) = \frac{(2m^2 - 3m)(m - 1){|Z(G)|}^2 + m(4m - 3)|Z(G)|}{2m - 1}.
\]
\end{theorem}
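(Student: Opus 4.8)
The plan is to follow the template used in the proofs of Theorems \ref{order-20} and \ref{main2}: first identify the vertex set and the centralizers of non-central elements, use these to exhibit the commuting graph $\overline{\mathcal{A}}_G$ as a disjoint union of complete graphs, read off $|v(\mathcal{A}_G)|$ and $|e(\mathcal{A}_G)|$, obtain $\L-spec(\mathcal{A}_G)$ from the companion spectrum result in \cite{lspac}, and finally substitute everything into \eqref{Lenergy}. Since $\frac{G}{Z(G)} \cong D_{2m}$ has order $2m$, we get $|v(\mathcal{A}_G)| = |G| - |Z(G)| = (2m-1)|Z(G)|$ immediately, which already explains the denominator $2m-1$ in the claimed formula.

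Next I would pin down the centralizers. Writing $\frac{G}{Z(G)} = \langle aZ(G), bZ(G) : a^m, b^2, baba \in Z(G)\rangle$, the preimage of the cyclic rotation subgroup $\langle aZ(G)\rangle$ gives a single abelian centralizer $C_G(a) = \langle a \rangle Z(G)$ of order $m|Z(G)|$, which accounts for every non-central rotation element and contributes the clique $K_{(m-1)|Z(G)|}$ to $\overline{\mathcal{A}}_G$. The reflections $a^jbZ(G)$ supply the remaining centralizers, each of which I must show is abelian; they contribute the smaller cliques. Granting that all centralizers of non-central elements are abelian (so that commuting is an equivalence relation on $G \setminus Z(G)$), $\overline{\mathcal{A}}_G$ is a disjoint union of complete graphs and hence $\mathcal{A}_G$ is complete multipartite; counting the edges in the complement then yields $|e(\mathcal{A}_G)|$ as a quadratic in $|Z(G)|$.

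With the partition in hand I would invoke the corresponding result (Theorem~3.3) of \cite{lspac} to write $\L-spec(\mathcal{A}_G)$ explicitly: the eigenvalue $0$, the eigenvalue $(2m-1)|Z(G)|$ with multiplicity one less than the number of parts, and, for each part, the eigenvalue $(2m-1)|Z(G)| - (\text{part size})$ with multiplicity one less than that size. Substituting these together with $\frac{2|e(\mathcal{A}_G)|}{|v(\mathcal{A}_G)|}$ into \eqref{Lenergy}, evaluating each $\left|\mu - \frac{2|e(\mathcal{A}_G)|}{|v(\mathcal{A}_G)|}\right|$ and collecting terms over the common denominator $2m-1$, the sum should collapse to the stated closed form.

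The hard part will be the centralizer analysis for the reflections, and in particular the parity of $m$. When $m$ is odd the center of $D_{2m}$ is trivial, each reflection has its own centralizer of order $2|Z(G)|$, and one obtains $m$ equal cliques; when $m$ is even the central involution $a^{m/2}Z(G) \in Z(D_{2m})$ forces reflections to pair up and enlarges their centralizers, so one must check that $a^{m/2}$ behaves consistently and that the resulting part sizes still combine into the same polynomial identity. Confirming that both parities produce a single formula, and determining on which side of $\frac{2|e(\mathcal{A}_G)|}{|v(\mathcal{A}_G)|}$ each eigenvalue lies for all $m \geq 2$ (so that the absolute values in \eqref{Lenergy} open with consistent signs), is the delicate step; the remaining algebra is routine.
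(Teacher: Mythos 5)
Your plan follows the paper's proof step for step (vertex count, centralizer analysis, the decomposition $\overline{\mathcal{A}}_G = K_{(m-1)|Z(G)|}\sqcup mK_{|Z(G)|}$, the Laplacian spectrum quoted from \cite{lspac} --- Theorem~3.4 there, not 3.3 --- and substitution into \eqref{Lenergy}), but the step you single out as the hard part is resolved in the wrong direction in your sketch. For even $m$ the central involution $y^{m/2}Z(G)$ of $G/Z(G)$ does \emph{not} enlarge the reflection centralizers: commuting modulo $Z(G)$ does not imply commuting in $G$, and here it provably fails. Since $G=\langle xy^{j}, y\rangle Z(G)$, any element commuting with both $y$ and some reflection-lift $xy^{j}$ commutes with all of $G$, i.e.\ is central; as $y^{m/2}\notin Z(G)$, it commutes with no $xy^{j}$, and if $xy^{j}$ commuted with $xy^{j+m/2}$ it would also commute with $(xy^{j})^{-1}(xy^{j+m/2})=y^{m/2}$, a contradiction. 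Hence $C_G(xy^{j})=Z(G)\sqcup xy^{j}Z(G)$ has order $2|Z(G)|$ for \emph{every} $m\ge 2$, which is exactly the uniform, parity-free structure the paper asserts (with ``it is easy to see'') and without which no single formula in $m$ could hold. Your branch in which ``reflections pair up and their centralizers enlarge'' would produce a different clique decomposition for even $m$ and hence a different energy, so this is not a delicate verification you postponed but a step that would fail.

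The sign analysis you also postponed is genuinely where the computation bites --- including in the paper itself. With $\frac{2|e(\mathcal{A}_G)|}{|v(\mathcal{A}_G)|}=\frac{3m(m-1)|Z(G)|}{2m-1}$, the eigenvalue $m|Z(G)|$ lies \emph{below} this average for $m>2$, and $\left|m|Z(G)|-\frac{2|e(\mathcal{A}_G)|}{|v(\mathcal{A}_G)|}\right|=\frac{m(m-2)|Z(G)|}{2m-1}$, whereas the paper's proof uses $\frac{m(m-1)|Z(G)|}{2m-1}$. Carrying your own plan through with the correct modulus, the four terms sum to $\frac{2m(m-1)(m-2)|Z(G)|^{2}+2m(2m-1)|Z(G)|}{2m-1}$, not the displayed closed form: for $G=S_3$ ($m=3$, $|Z(G)|=1$) the spectrum $\{0,3,5,5,5\}$ with $\frac{2|e(\mathcal{A}_G)|}{|v(\mathcal{A}_G)|}=\frac{18}{5}$ gives $LE(\mathcal{A}_{S_3})=\frac{42}{5}$, not $9$; and at $m=2$ the displayed formula gives $\frac{2|Z(G)|^{2}+10|Z(G)|}{3}$, contradicting Theorem~\ref{main2} with $p=2$, which forces $4|Z(G)|$ for the same group. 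So your concluding promise that the sum ``should collapse to the stated closed form'' cannot be kept: with consistent signs it collapses to a different expression, and the statement as printed inherits the paper's arithmetic slip.
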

\begin{proof}
Clearly, $|v({\mathcal{A}}_G)| = (2m - 1)|Z(G)|$. Since $\frac{G}{Z(G)} \cong D_{2m}$ we have $\frac{G}{Z(G)} = \langle xZ(G), yZ(G) : x^2, y^m,  xyx^{-1}y\in Z(G)\rangle$, where $x, y \in G$ with $xy \ne yx$.
It is easy to see that  for any $z \in Z(G)$
\begin{align*}
  C_G(xy^j) & = C_G(xy^jz) = Z(G)  \sqcup xy^jZ(G), 1 \leq j \leq m \quad \text{and}  \\
  C_G(y) & = C_G(y^iz) = Z(G) \sqcup yZ(G) \sqcup\cdots \sqcup  y^{m - 1}Z(G), 1 \leq i \leq m - 1
\end{align*}
are the only  centralizers of non-central elements of $G$. Also note that these centralizers are abelian subgroups of $G$ and  $|C_G(x^jy)| = 2|Z(G)|$ for $1 \leq j \leq m$ and $|C_G(y)| = m|Z(G)|$. Hence
\[
{\overline{{\mathcal{A}}}_G} = K_{(m - 1)|Z(G)|}\sqcup mK_{|Z(G)|}.
\]
and $|e({\mathcal{A}}_G)| = \frac{3m(m - 1){|Z(G)|}^2}{2}$. By Theorem 3.4 of \cite{lspac}, we have
\begin{align*}
\L-spec({\mathcal{A}_G}) = &\{ 0 , {(m|Z(G)|)}^{(m-1)|Z(G)|-1} , {(2(m-1)|Z(G)|)}^{m|Z(G)|-m},\\
&{((2m-1)|Z(G)|)}^m \}.
\end{align*}
Therefore, $\left|0 - \frac{2|e({\mathcal{A}}_G)|}{|v({\mathcal{A}}_G)|}\right| = \frac{3m(m - 1)|Z(G)|}{2m - 1}$, $\left|m|Z(G)| - \frac{2|e({\mathcal{A}}_G)|}{|v({\mathcal{A}}_G)|}\right| = \frac{m(m - 1)|Z(G)|}{2m - 1}$,\\ $\left|2(m-1)|Z(G)| - \frac{2|e({\mathcal{A}}_G)|}{|v({\mathcal{A}}_G)|}\right| = \frac{(m - 1)(m - 2)|Z(G)|}{2m - 1}$ and $\left|(2m-1)|Z(G)| - \frac{2|e({\mathcal{A}}_G)|}{|v({\mathcal{A}}_G)|}\right| = \frac{(m^2 - m +1)|Z(G)|}{2m - 1}$. By \eqref{Lenergy}, we have
\begin{align*}
  LE({\mathcal{A}}_G) = & \frac{3m(m - 1)|Z(G)|}{2m - 1} + ((m-1)|Z(G)|-1)\left(\frac{m(m - 1)|Z(G)|}{2m - 1}\right)\\
   & + (m|Z(G)|-m)\left(\frac{(m - 1)(m - 2)|Z(G)|}{2m - 1}\right) + m\left(\frac{(m^2 - m +1)|Z(G)|}{2m - 1}\right)
\end{align*}
and hence the result follows.
\end{proof}

Using Theorem \ref{main4}, we now compute the  Laplacian energy  of the non-commuting graphs of the groups $M_{2mn}, D_{2m}$ and $Q_{4n}$ respectively.

\begin{corollary}\label{main05}
Let $M_{2mn} = \langle a, b : a^m = b^{2n} = 1, bab^{-1} = a^{-1} \rangle$ be a metacyclic group, where $m > 2$.
\[
LE({\mathcal{A}}_{M_{2mn}}) =
\begin{cases}
  \frac{m(2m - 3)(m - 1)n^2 + m(4m - 3)n}{2m - 1}, & \mbox{if m is odd }  \\
  \frac{m(m - 2)(m - 3)n^2 + m(2m - 3)n}{m - 1}, & \mbox{if m is even}.
\end{cases}
\]
\end{corollary}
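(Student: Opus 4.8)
The plan is to reduce the statement to Theorem \ref{main4} by identifying the central factor $M_{2mn}/Z(M_{2mn})$ as a dihedral group and determining the order of its center. The entire argument turns on a case distinction according to the parity of $m$, which is precisely the source of the two branches in the stated formula.

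First I would pin down the center. From the relation $bab^{-1} = a^{-1}$ one obtains $b^j a^i b^{-j} = a^{(-1)^j i}$, so a general element $a^i b^j$ commutes with $b$ exactly when $m \mid 2i$, and (using $m > 2$) commutes with $a$ exactly when $j$ is even. Hence $Z(M_{2mn}) = \{ a^i b^j : m \mid 2i,\ j \text{ even} \}$. When $m$ is odd this forces $i = 0$, so $Z(M_{2mn}) = \langle b^2 \rangle$ has order $n$; when $m$ is even it permits $i \in \{ 0, m/2 \}$, so $|Z(M_{2mn})| = 2n$.

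Next I would identify the quotient. Writing $\bar{x}$ for $xZ(M_{2mn})$, in either case $\bar{b}$ has order $2$ and $\bar{b}\,\bar{a}\,\bar{b}^{-1} = \bar{a}^{-1}$, so the quotient is dihedral and only the order of $\bar{a}$ varies. For $m$ odd, $\bar{a}$ has order $m$, giving $M_{2mn}/Z(M_{2mn}) \cong D_{2m}$ with $|Z(M_{2mn})| = n$. For $m$ even, $a^{m/2} \in Z(M_{2mn})$ forces $\bar{a}$ to have order $m/2$, giving $M_{2mn}/Z(M_{2mn}) \cong D_{m} = D_{2(m/2)}$ with $|Z(M_{2mn})| = 2n$.

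Finally I would substitute into Theorem \ref{main4}. The odd case is immediate: putting $|Z(G)| = n$ into $\frac{(2m^2-3m)(m-1)|Z(G)|^2 + m(4m-3)|Z(G)|}{2m-1}$ and writing $2m^2 - 3m = m(2m-3)$ yields the first branch. The even case requires applying Theorem \ref{main4} with dihedral parameter $m/2$ and $|Z(G)| = 2n$; the factor $(2n)^2 = 4n^2$ cancels the denominators produced by the halving, the denominator $2(m/2) - 1$ collapses to $m - 1$, and one arrives at the second branch. I expect the only real obstacle to be bookkeeping: one must carefully keep the dihedral index of the quotient distinct from the integer $m$ in the presentation of $M_{2mn}$, and carry out the cancellations in the even case without slips.
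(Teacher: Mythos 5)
Your proposal is correct and takes essentially the same approach as the paper: determine $Z(M_{2mn})$ (of order $n$ or $2n$ according as $m$ is odd or even), identify the central quotient as $D_{2m}$ or $D_m = D_{2(m/2)}$ accordingly, and substitute into Theorem~\ref{main4}. Your write-up is in fact more explicit than the paper's two-line proof (which merely asserts the center and quotient), and your even-case substitution with dihedral parameter $m/2$ and $|Z(G)| = 2n$ checks out exactly against the stated second branch.
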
`

\begin{proof}
  Observe that $Z(M_{2mn}) = \langle b^2 \rangle$ or $\langle b^2 \rangle \cup a^{\frac{m}{2}}\langle b^2 \rangle$ according as $m$ is odd or even.  Also, it is easy to see that $\frac{M_{2mn}}{Z(M_{2mn})} \cong D_{2m}$ or $D_m$ according as $m$ is odd or even. Hence, the result follows from Theorem \ref{main4}
\end{proof}
\noindent As a corollary to the above result we have the following results.

\begin{corollary}\label{main005}
Let $D_{2m} = \langle a, b : a^m = b^{2} = 1, bab^{-1} = a^{-1} \rangle$ be  the dihedral group of order $2m$, where $m > 2$.
\[
LE({\mathcal{A}}_{D_{2m}}) =
\begin{cases}
  m^2, & \mbox{ if m is odd } \\
  \frac{m(m^2 - 3m + 3)}{m - 1}, & \mbox{if m is even }.
\end{cases}
\]

\end{corollary}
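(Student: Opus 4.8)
The plan is to recognize the dihedral group as a degenerate member of the metacyclic family already handled in Corollary \ref{main05}. Indeed, setting $n = 1$ in the presentation $M_{2mn} = \langle a, b : a^m = b^{2n} = 1, bab^{-1} = a^{-1}\rangle$ yields exactly $\langle a, b : a^m = b^2 = 1, bab^{-1} = a^{-1}\rangle = D_{2m}$. Hence $\mathcal{A}_{D_{2m}} = \mathcal{A}_{M_{2m}}$, and it suffices to substitute $n = 1$ into the two branches of the formula in Corollary \ref{main05} and simplify; the parity hypothesis ($m$ odd versus $m$ even) carries over unchanged.

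Concretely, for $m$ odd the numerator becomes $m(2m-3)(m-1) + m(4m-3)$. Factoring out $m$ and expanding the bracket gives $m\bigl[(2m-3)(m-1) + (4m-3)\bigr] = m(2m^2 - m) = m^2(2m-1)$, so division by the denominator $2m-1$ collapses the expression to $m^2$. For $m$ even the numerator becomes $m(m-2)(m-3) + m(2m-3) = m\bigl[(m-2)(m-3) + (2m-3)\bigr] = m(m^2 - 3m + 3)$, and dividing by the denominator $m-1$ yields $\tfrac{m(m^2-3m+3)}{m-1}$. These are precisely the two claimed values.

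As an independent check one can bypass Corollary \ref{main05} and work straight from Theorem \ref{main4}: here $Z(D_{2m})$ is trivial when $m$ is odd and equals $\langle a^{m/2}\rangle$ (of order $2$) when $m$ is even, whence $\frac{D_{2m}}{Z(D_{2m})} \cong D_{2m}$ or $D_m$ respectively. Substituting the pair (dihedral parameter, $|Z(G)|$) equal to $(m, 1)$ in the odd case and $(m/2, 2)$ in the even case into the formula of Theorem \ref{main4} reproduces the same two polynomial identities. There is no serious obstacle in either route; the only point demanding care is bookkeeping the two distinct roles played by the symbol $m$ — the order parameter $2m$ of the ambient dihedral group in the corollary versus the parameter appearing inside Theorem \ref{main4} — and then verifying the two elementary factorizations $m(2m^2-m) = m^2(2m-1)$ and $(m-2)(m-3)+(2m-3) = m^2-3m+3$ that drive the cancellation of the denominators.
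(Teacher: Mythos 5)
Your proposal is correct and matches the paper's own route: the paper derives this corollary exactly by specializing Corollary \ref{main05} at $n=1$ (stating only ``As a corollary to the above result''), and your two factorizations $m\bigl[(2m-3)(m-1)+(4m-3)\bigr] = m^2(2m-1)$ and $(m-2)(m-3)+(2m-3) = m^2-3m+3$ correctly supply the algebra the paper leaves implicit. Your secondary check via Theorem \ref{main4} with parameters $(m,1)$ for $m$ odd and $(m/2,2)$ for $m$ even is also sound and consistent with how the paper itself obtains Corollary \ref{main05}.
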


\begin{corollary}\label{q4m}
Let $Q_{4m} = \langle x, y : y^{2m} = 1, x^2 = y^m,yxy^{-1} = y^{-1}\rangle$, where $m \geq 2$, be the   generalized quaternion group of order $4m$. Then
\[
LE({\mathcal{A}}_{Q_{4m}}) = \frac{2m(4m^2 - 6m + 3)}{2m - 1}.
\]
\end{corollary}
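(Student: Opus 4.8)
The plan is to exhibit $Q_{4m}$ as a group satisfying the hypothesis of Theorem \ref{main4} and then to specialize that theorem's formula. Concretely, I would first show that $Z(Q_{4m}) = \langle y^m \rangle = \{1, y^m\}$, so that $|Z(Q_{4m})| = 2$, and next that $\frac{Q_{4m}}{Z(Q_{4m})} \cong D_{2m}$; the conclusion then follows by substituting $|Z(G)| = 2$ into Theorem \ref{main4} and simplifying the resulting polynomial in $m$.

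For the center, I would argue directly from the defining relations. Since $y^{2m} = 1$ we have $y^{-m} = y^m$, and conjugation by $x$ inverts $y$, so $x y^m x^{-1} = y^{-m} = y^m$; thus $y^m$ commutes with both generators and hence $y^m \in Z(Q_{4m})$. To see there is nothing more, I would verify that no power $y^i$ with $y^i \neq 1, y^m$ is central (it fails to commute with $x$) and that no element of the form $xy^i$ is central (it fails to commute with $y$), which pins down $Z(Q_{4m}) = \{1, y^m\}$, of order $2$.

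For the central quotient, I would pass to images modulo $Z(Q_{4m})$. Writing $\bar x, \bar y$ for the cosets, the membership $y^m \in Z(Q_{4m})$ yields $\bar y^{\,m} = \bar 1$, the relation $x^2 = y^m$ yields $\bar x^{\,2} = \bar 1$, and the conjugation relation descends to $\bar x \bar y \bar x^{-1} = \bar y^{-1}$. These are exactly a presentation of the dihedral group of order $2m$, and since $|Q_{4m}/Z(Q_{4m})| = 4m/2 = 2m$ matches this order, I obtain $\frac{Q_{4m}}{Z(Q_{4m})} \cong D_{2m}$. With $|Z(G)| = 2$, Theorem \ref{main4} then gives $LE(\mathcal{A}_{Q_{4m}}) = \frac{4(2m^2 - 3m)(m - 1) + 2m(4m - 3)}{2m - 1}$, and expanding the numerator to $2m(4m^2 - 6m + 3)$ delivers the claimed value.

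The only real content lies in the two group-theoretic steps; the final substitution and polynomial simplification are routine once Theorem \ref{main4} is in hand. The mild obstacle to watch for is the exhaustiveness of the center analysis — in particular, confirming that the reflection-type elements $xy^i$ are genuinely non-central — so that $|Z(Q_{4m})| = 2$ holds exactly and the hypothesis of Theorem \ref{main4} applies with the matching value of $m$.
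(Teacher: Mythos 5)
Your proposal is correct and follows exactly the paper's route: the paper's proof likewise just notes that $Z(Q_{4m}) = \{1, y^m\}$ has order $2$ and that $Q_{4m}/Z(Q_{4m}) \cong D_{2m}$, then substitutes $|Z(G)| = 2$ into Theorem \ref{main4}. You simply spell out the center computation and the quotient presentation in more detail (and your arithmetic check that the numerator simplifies to $2m(4m^2 - 6m + 3)$ is right), so there is nothing to correct.
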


\begin{proof}
The result follows from Theorem \ref{main4} noting that  $Z(Q_{4m}) = \{1, a^m\}$ and  $\frac{Q_{4m}}{Z(Q_{4m})} \cong D_{2m}$.
\end{proof}

\section{Some well-known groups}
Now  we compute Laplacian energy of  the  non-commuting graphs of some  well-known families of finite non-abelian groups.
\begin{proposition}\label{order-pq}
Let $G$ be a non-abelian group of order $pq$, where $p$ and $q$ are primes with $p\mid (q - 1)$. Then
\[
LE({\mathcal{A}}_G) = \frac{2q(p^2 - 1)(q - 1)}{pq - 1}.
\]

\end{proposition}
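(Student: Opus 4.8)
The plan is to mimic the template used for Theorems~\ref{main2} and \ref{main4}: determine the centralizer structure of $G$, read off the clique decomposition of $\overline{\mathcal{A}}_G$, compute $|e(\mathcal{A}_G)|$ together with the Laplacian spectrum of $\mathcal{A}_G$, and then evaluate \eqref{Lenergy}. I would begin with the structure of $G$. Since $G$ is non-abelian, $G/Z(G)$ is non-cyclic, so $|Z(G)|$ (a divisor of $pq$) cannot be $p$, $q$, or $pq$, as each of these would make $G/Z(G)$ cyclic; hence $Z(G)=\{1\}$ and $|v(\mathcal{A}_G)|=pq-1$. Because $p\mid(q-1)$ forces $p<q$, Sylow's theorems give a unique normal cyclic subgroup $Q$ of order $q$ together with exactly $q$ Sylow $p$-subgroups of order $p$ meeting pairwise trivially. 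Computing centralizers then yields $C_G(x)=Q$ for each $x\in Q\setminus\{1\}$ and $C_G(y)=\langle y\rangle$ for each element $y$ of order $p$, all of which are abelian; thus the non-central elements split into one clique of size $q-1$ and $q$ cliques of size $p-1$, so that
\[
\overline{\mathcal{A}}_G = K_{q-1}\sqcup qK_{p-1}.
\]

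Next I would assemble the two inputs to \eqref{Lenergy}. Counting degrees in the complete multipartite graph $\mathcal{A}_G$ --- a vertex of the $Q$-clique has degree $q(p-1)$ and a vertex of a $p$-clique has degree $p(q-1)$ --- gives $2|e(\mathcal{A}_G)| = (q-1)\,q(p-1) + q(p-1)\,p(q-1) = q(p^2-1)(q-1)$. For the spectrum I would invoke the corresponding theorem of \cite{lspac}, or derive it directly from the complement identity $L(\mathcal{A}_G)+L(\overline{\mathcal{A}}_G) = (pq-1)I - J$: the Laplacian spectrum of $K_{q-1}\sqcup qK_{p-1}$ is $\{0^{q+1},(q-1)^{q-2},(p-1)^{q(p-2)}\}$, and sending each eigenvalue $\mu$ on $\mathbf{1}^{\perp}$ to $(pq-1)-\mu$ produces the Laplacian spectrum
\[
\{0,\,(q(p-1))^{q-2},\,(p(q-1))^{q(p-2)},\,(pq-1)^{q}\}.
\]

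The delicate step, and the one I expect to be the main obstacle, is resolving the absolute values in \eqref{Lenergy}. Writing $d=\frac{2|e(\mathcal{A}_G)|}{|v(\mathcal{A}_G)|}=\frac{q(p^2-1)(q-1)}{pq-1}$, the two extreme eigenvalues are easy ($0<d$ and $pq-1>d$), but the two interior eigenvalues $q(p-1)$ and $p(q-1)$ straddle the average degree $d$. I would settle this with the short identities $d-q(p-1)=\frac{q(p-1)(q-p)}{pq-1}$ and $p(q-1)-d=\frac{(q-1)(q-p)}{pq-1}$, each positive exactly because $p<q$; this establishes the ordering $0<q(p-1)<d<p(q-1)<pq-1$ and fixes the sign of every term (when $p=2$ the middle eigenvalue $p(q-1)$ simply has multiplicity $0$ and drops out). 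Summing $|\mu-d|$ over the four eigenvalues, weighted by the multiplicities above, and simplifying then yields $LE(\mathcal{A}_G)$; the remainder is routine algebraic bookkeeping.
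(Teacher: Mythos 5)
Your proposal retraces the paper's proof step for step: the same centralizer analysis giving $\overline{\mathcal{A}}_G = K_{q-1}\sqcup qK_{p-1}$, the same edge count $2|e({\mathcal{A}}_G)| = q(p^2-1)(q-1)$, and the same Laplacian spectrum $\{0,\ (q(p-1))^{q-2},\ (p(q-1))^{q(p-2)},\ (pq-1)^{q}\}$ --- the paper imports this from Proposition 4.1 of \cite{lspac}, while your complementation derivation is a correct self-contained alternative, and note $pq-q = q(p-1)$, $pq-p = p(q-1)$, $pq-2q = q(p-2)$, so your multiplicities match the paper's exactly. Your four values of $\left|\mu - d\right|$ also agree term-by-term with the paper's displayed expressions, and your explicit sign analysis via $q(p-1) < d < p(q-1)$, justified by $p<q$, is a genuine improvement on the paper, which merely asserts the absolute values.

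However, the step you defer as ``routine algebraic bookkeeping'' is precisely where the proof fails, because the stated formula is false. Summing your four (correct) terms with their multiplicities gives
\[
LE({\mathcal{A}}_G) = \frac{2q(p-1)\left(q^2 - q + p - 1\right)}{pq-1},
\]
which exceeds the claimed $\frac{2q(p^2-1)(q-1)}{pq-1}$ by $\frac{2q(p-1)(q-p)(q-2)}{pq-1} > 0$; equality would require $(q-p)(q-2)=0$, impossible since $p \mid (q-1)$ forces $2 \le p < q$ and $q \ge 3$. Concretely, take $p=2$, $q=3$, so $G \cong S_3$: the spectrum is $\{0,\,3,\,5^3\}$ with $d = \frac{18}{5}$, and $LE({\mathcal{A}}_G) = \frac{18}{5} + \frac{3}{5} + 3\cdot\frac{7}{5} = \frac{42}{5}$, whereas the proposition asserts $\frac{36}{5}$. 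The paper's own proof commits the same final-simplification error --- its four displayed terms also sum to $\frac{42}{5}$ in this case, so ``hence the result follows'' does not follow there either. Your attempt thus faithfully reproduces the paper's argument, including its flaw: had you actually carried out the bookkeeping, you would have ended by refuting, not proving, the stated formula, and the honest conclusion is that the proposition should read $LE({\mathcal{A}}_G) = \frac{2q(p-1)(q^2-q+p-1)}{pq-1}$.
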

\begin{proof}
It is clear that $|v({\mathcal{A}}_G)| = pq - 1$. Note that $|Z(G)| = 1$ and the centralizers of non-central elements of $G$ are precisely the Sylow subgroups of $G$. The number of Sylow $q$-subgroups and Sylow $p$-subgroups of $G$ are one and $q$ respectively. Therefore, we have
\[
{\overline{{\mathcal{A}}}_G} = K_{q-1} \sqcup qK_{p - 1}
\]
and hence $|e({\mathcal{A}}_G)| = \frac{q(p^2 - 1)(q - 1)}{2}$. By Proposition 4.1 of \cite{lspac}, we have
\[
\L-spec({\mathcal{A}}_G) = \{0, {(pq - q)}^{q - 2}, {(pq - p)}^{pq - 2q}, {(pq - 1)^q}\}.
\]
Therefore, $\left|0 - \frac{2|e({\mathcal{A}}_G)|}{|v({\mathcal{A}}_G)|}\right| = \frac{p^2q^2 - p^2q - q^2 + q}{pq - 1}$, $\left|pq - q - \frac{2|e({\mathcal{A}}_G)|}{|v({\mathcal{A}}_G)|}\right| = \frac{q(q - p)(p - 1)}{pq - 1}$,\\ $\left|pq - p - \frac{2|e({\mathcal{A}}_G)|}{|v({\mathcal{A}}_G)|}\right| = \frac{(q - p)(q - 1)}{pq - 1}$ and $\left|pq - 1 - \frac{2|e({\mathcal{A}}_G)|}{|v({\mathcal{A}}_G)|}\right| = \frac{p^2q + q^2 - 2pq -q + 1}{pq - 1}$. By \eqref{Lenergy}, we have
\begin{align*}
 LE({\mathcal{A}}_G)  = & \frac{p^2q^2 - p^2q - q^2 + q}{pq - 1} + (q - 2)\left(\frac{q(q - p)(p - 1)}{pq - 1}\right) \\
  & + (pq - 2q)\left(\frac{(q - p)(q - 1)}{pq - 1}\right) + q\left(\frac{p^2q + q^2 - 2pq -q + 1}{pq - 1}\right)
\end{align*}
and hence the result follows.
\end{proof}
\begin{proposition}\label{semid}
Let $QD_{2^n}$ denotes the quasidihedral group $\langle a, b : a^{2^{n-1}} =  b^2 = 1, bab^{-1} = a^{2^{n - 2} - 1}\rangle$, where $n \geq 4$. Then
\[
 LE({\mathcal{A}}_{QD_{2^n}}) = \frac{2^{3n - 3} - 2^{2n} + 3.2^n}{2^{n -1} - 1}.
\]
\end{proposition}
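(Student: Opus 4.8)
The plan is to reuse the template of the preceding results: pin down the centralizers of the non-central elements of $QD_{2^n}$, read off the commuting graph $\overline{\mathcal{A}}_{QD_{2^n}}$ as a disjoint union of complete graphs, count the edges of $\mathcal{A}_{QD_{2^n}}$, import its Laplacian spectrum from \cite{lspac}, and finish with \eqref{Lenergy}. Writing $G = QD_{2^n}$, I would first record that $Z(G) = \{1,\, a^{2^{n-2}}\}$, so $|Z(G)| = 2$ and $|v(\mathcal{A}_G)| = 2^n - 2$.

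The structural heart of the argument is the centralizer computation. The cyclic subgroup $\langle a \rangle$ has order $2^{n-1}$ and contains $Z(G)$; using the defining relation $ba = a^{2^{n-2}-1}b$ I would check that no element outside $\langle a \rangle$ commutes with a non-central power of $a$, so $C_G(a^i) = \langle a \rangle$ for every non-central $a^i$. For the coset elements $a^i b$, the same relation gives $(a^i b)(a^j b) = a^{i + j(2^{n-2}-1)}$ and $(a^j b)(a^i b) = a^{j + i(2^{n-2}-1)}$, whence $a^i b$ and $a^j b$ commute exactly when $(i-j)(2 - 2^{n-2}) \equiv 0 \pmod{2^{n-1}}$, i.e. when $j \equiv i \pmod{2^{n-2}}$. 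This yields $C_G(a^i b) = \{1,\, a^{2^{n-2}},\, a^i b,\, a^{i+2^{n-2}}b\}$ of order $4$, giving $2^{n-2}$ distinct centralizers of the coset type alongside $\langle a \rangle$. All of these are abelian, so the commuting graph decomposes on the sets $C_G(x)\setminus Z(G)$ as
\[
\overline{\mathcal{A}}_{QD_{2^n}} = K_{2^{n-1}-2} \sqcup 2^{n-2}K_2,
\]
and a degree count gives $|e(\mathcal{A}_{QD_{2^n}})| = 3\cdot 2^{n-2}(2^{n-1}-2)$.

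With the graph in hand I would quote \cite{lspac} to obtain
\[
\L-spec(\mathcal{A}_{QD_{2^n}}) = \{0,\, (2^{n-1})^{2^{n-1}-3},\, (2^n-4)^{2^{n-2}},\, (2^n-2)^{2^{n-2}}\},
\]
which is precisely the Laplacian spectrum of the complete multipartite complement of the above clique decomposition. The remaining task is to evaluate $\left|\mu - 2|e(\mathcal{A}_{QD_{2^n}})|/|v(\mathcal{A}_{QD_{2^n}})|\right|$ at each of the four eigenvalues and substitute into \eqref{Lenergy}. Setting $q = 2^{n-1}-1$ streamlines the bookkeeping: the average degree is $3(q^2-1)/(2q)$, the four deviations come out to $3(q^2-1)/(2q)$, $(q-3)(q+1)/(2q)$, $(q-1)(q-3)/(2q)$, $(q^2+3)/(2q)$, and after weighting by the multiplicities $1$, $q-2$, $(q+1)/2$, $(q+1)/2$ and collecting over the common denominator $4q$ the numerator factors as $4(q+1)(q^2-2q+3)$; thus $LE = (q+1)(q^2-2q+3)/q$, and re-substituting $q = 2^{n-1}-1$ gives the asserted value.

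The main obstacle is the centralizer computation for the coset $a^i b$: unlike the dihedral twist $+1$ or the quaternion twist $-1$ absorbed into Theorem \ref{main4}, the semidihedral twist $2^{n-2}-1$ demands explicit arithmetic modulo $2^{n-1}$, and one must verify that the commuting condition collapses to $j \equiv i \pmod{2^{n-2}}$ --- which hinges on $2^{n-3}-1$ being odd for $n \geq 4$. After that the steps are routine, although the final factorization of the energy numerator should be verified directly to confirm that the denominator $2^{n-1}-1$ cancels cleanly.
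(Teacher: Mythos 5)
Your proposal is correct and follows essentially the same route as the paper: the same centralizer structure ($C(a^i)=\langle a\rangle$ and the four-element centralizers $\{1, a^{2^{n-2}}, a^jb, a^{j+2^{n-2}}b\}$), the same decomposition $\overline{\mathcal{A}}_{QD_{2^n}} = K_{2^{n-1}-2} \sqcup 2^{n-2}K_2$, the same edge count, the same spectrum imported from the cited reference, and the same four deviations summed via the Laplacian energy formula. Your substitution $q = 2^{n-1}-1$ and the explicit modular arithmetic verifying the commuting condition $j \equiv i \pmod{2^{n-2}}$ are cosmetic refinements of details the paper simply asserts; all your intermediate quantities agree with the paper's after the change of variable, and the final value matches.
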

\begin{proof}
It is clear that $Z(QD_{2^n}) = \{1, a^{2^{n - 2}}\}$, so $|v({\mathcal{A}}_{QD_{2^n}})| = 2(2^{n - 1} - 1)$. Note that
\begin{align*}
  C_{QD_{2^n}}(a) &= C_{QD_{2^n}}(a^i)  = \langle a \rangle \text{ for } 1 \leq i \leq 2^{n - 1} - 1, i \ne 2^{n - 2} \quad \text{and}\\
  C_{QD_{2^n}}(a^jb) &= \{1, a^{2^{n - 2}}, a^jb, a^{j + 2^{n - 2}}b \} \text{ for } 1 \leq j \leq 2^{n - 2}
\end{align*}
are the only  centralizers of non-central elements of $QD_{2^n}$. Note that these centralizers are abelian subgroups of  $QD_{2^n}$. Therefore, we have
\[
\overline{{\mathcal{A}}}_{QD_{2^n}} = K_{|C_{QD_{2^n}}(a)\setminus Z(QD_{2^n})|} \sqcup (\underset{j = 1}{\overset{2^{n - 2}}{\sqcup}} K_{|C_{QD_{2^n}}(a^jb)\setminus Z(QD_{2^n})|}).
\]
Since $|C_{QD_{2^n}}(a)| = 2^{n - 1}, |C_{QD_{2^n}}(a^jb)| = 4$ for $1 \leq j \leq 2^{n - 2}$, we have $\overline{{\mathcal{A}}}_{QD_{2^n}} = K_{2^{n - 1} - 2} \sqcup 2^{n - 2} K_2$. Hence
\[
|e({\mathcal{A}}_{QD_{2^n}})| = \frac{3.2^{2n - 2} - 6.2^{n - 1}}{2}.
\]
By Proposition 4.2 of \cite{lspac}, we have
\[
\L-spec({\mathcal{A}}_{QD_{2^n}}) = \{0, {(2^{n-1})}^{2^{n-1}-3}, {(2^n-4)}^{2^{n-2}}, {(2^n-2)}^{2^{n-2}}\}.
\]
Therefore,  $\left|0 - \frac{2|e({\mathcal{A}}_{QD_{2^n}})|}{|v({\mathcal{A}}_{QD_{2^n}})|}\right| = \frac{3.2^{n - 1}(2^{n - 1} - 2)}{2.2^{n - 1} - 2}$, $\left|2^{n-1} - \frac{2|e({\mathcal{A}}_{QD_{2^n}})|}{|v({\mathcal{A}}_{QD_{2^n}})|}\right| = \frac{2^{2n - 2} - 4.2^{n - 1}}{2.2^{n - 1} - 2}$,\\ $\left|2^n-4 - \frac{2|e({\mathcal{A}}_{QD_{2^n}})|}{|v({\mathcal{A}}_{QD_{2^n}})|}\right| = \frac{2^{2n - 2} - 6.2^{n - 1} + 8}{2.2^{n - 1} - 2}$ and  $\left|2^n-2 - \frac{2|e({\mathcal{A}}_{QD_{2^n}})|}{|v({\mathcal{A}}_{QD_{2^n}})|}\right| = \frac{2^{2n - 2} - 2.2^{n - 1} + 4}{2.2^{n - 1} - 2}$.
By \eqref{Lenergy}, we have
\begin{align*}
  LE({\mathcal{A}}_{QD_{2^n}}) = & \frac{3.2^{n - 1}(2^{n - 1} - 2)}{2.2^{n - 1} - 2} + (2^{n-1}-3)\left(\frac{2^{2n - 2} - 4.2^{n - 1}}{2.2^{n - 1} - 2}\right) \\
   & + 2^{n-2}\left(\frac{2^{2n - 2} - 6.2^{n - 1} + 8}{2.2^{n - 1} - 2}\right) + 2^{n-2}\left(\frac{2^{2n - 2} - 2.2^{n - 1} + 4}{2.2^{n - 1} - 2}\right)
\end{align*}
and hence the result follows.
\end{proof}

\begin{proposition}\label{psl}
Let $G$ denotes the projective special linear group  $PSL(2, 2^k)$, where $k \geq 2$. Then
\[
 LE({\mathcal{A}}_G) = \frac{3.2^{6k} - 2.2^{5k} - 7.2^{4k} + 2^{3k} + 4.2^{2k} +2^k}{2^{3k} - 2^k - 1}.
\]
\end{proposition}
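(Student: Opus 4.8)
The plan is to follow exactly the template of Propositions~\ref{order-pq} and~\ref{semid}: first identify the commuting graph $\overline{\mathcal{A}}_G$ as a disjoint union of complete graphs, then read off $\L-spec(\mathcal{A}_G)$ from the companion paper and evaluate \eqref{Lenergy}. Writing $q = 2^k$, I recall that $PSL(2,q) = SL(2,q)$ has order $q(q^2-1)$ and, since $q$ is even, trivial center. Hence $|v(\mathcal{A}_G)| = q^3 - q - 1 = 2^{3k} - 2^k - 1$, which is precisely the denominator appearing in the claimed value, a reassuring consistency check at the outset.

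The heart of the argument is to describe the centralizers of the non-identity elements of $PSL(2,2^k)$. They fall into three families according to the type of the element: a unipotent (involution) has centralizer equal to a Sylow $2$-subgroup, which is elementary abelian of order $q$; a split semisimple element has centralizer a split maximal torus, cyclic of order $q-1$; and a non-split semisimple element has centralizer a non-split maximal torus, cyclic of order $q+1$. All three are abelian, so $G$ is an $AC$-group, and distinct members of each family meet only in the (trivial) center; this is what makes commutativity transitive on non-central elements. A standard count via the normalizers gives $q+1$ Sylow $2$-subgroups, $\frac{q(q+1)}{2}$ split tori (normalizer of order $2(q-1)$), and $\frac{q(q-1)}{2}$ non-split tori (normalizer of order $2(q+1)$). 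Taking the cliques on the non-central parts of these abelian centralizers yields
\[
\overline{\mathcal{A}}_G = (q+1)K_{q-1} \sqcup \frac{q(q+1)}{2}K_{q-2} \sqcup \frac{q(q-1)}{2}K_{q},
\]
and one verifies that the clique sizes sum to $q^3 - q - 1$, matching the vertex count.

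From here the computation is mechanical. The edge count follows from $|e(\mathcal{A}_G)| = \binom{|v(\mathcal{A}_G)|}{2} - |e(\overline{\mathcal{A}}_G)|$, equivalently $2|e(\mathcal{A}_G)| = |v(\mathcal{A}_G)|^2 - \sum_i s_i^2$ summed over the part sizes $s_i$. Feeding the clique structure into Proposition~4.3 of \cite{lspac} then supplies $\L-spec(\mathcal{A}_G)$ as the Laplacian spectrum of the associated complete multipartite graph: the value $0$, the value $|v(\mathcal{A}_G)|-s$ with multiplicity $s-1$ for each part of size $s$, and $|v(\mathcal{A}_G)|$ with multiplicity one less than the number of parts. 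I would then evaluate each difference $\left|\mu - \frac{2|e(\mathcal{A}_G)|}{|v(\mathcal{A}_G)|}\right|$ and sum according to \eqref{Lenergy}, collecting all terms over the common denominator $q^3-q-1$ to reach the stated expression.

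The main obstacle is the structural step identifying the centralizers and counting them. Unlike the earlier results, where the relevant abelian subgroups are read off directly from a presentation, here I must invoke the subgroup and conjugacy-class theory of $PSL(2,2^k)$ — the Sylow structure together with the two classes of maximal tori and their dihedral normalizers. Once this decomposition of $\overline{\mathcal{A}}_G$ is in place, the remaining arithmetic is routine and entirely parallel to the proofs above.
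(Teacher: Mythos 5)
Your proposal is correct and follows essentially the same route as the paper: the same decomposition of the commuting graph into $(2^k+1)K_{2^k-1} \sqcup 2^{k-1}(2^k+1)K_{2^k-2} \sqcup 2^{k-1}(2^k-1)K_{2^k}$, the same appeal to Proposition 4.3 of \cite{lspac} for the Laplacian spectrum, and the same evaluation of \eqref{Lenergy}; your clique sizes, conjugate counts, edge count, and spectrum (eigenvalue $n-s$ with multiplicity $s-1$ per part, $n$ with multiplicity one less than the number of parts) all agree with the paper's values. The only cosmetic difference is that where the paper cites Proposition 3.21 of \cite{Ab06} for the centralizer classification, you re-derive the same data from the standard torus/normalizer structure of $PSL(2,2^k)$, which is an equivalent justification.
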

\begin{proof}
Clearly, $|v({\mathcal{A}}_G)| = 2^{3k} - 2^k - 1$. Since $G$ is a non-abelian group of order $2^k(2^{2k} - 1)$ and its center is trivial. By Proposition 3.21 of \cite{Ab06}, the set of centralizers of non-trivial elements of $G$ is given by
\[
\{xPx^{-1}, xAx^{-1}, xBx^{-1} : x \in G\}
\]
where $P$ is an elementary abelian \quad $2$-subgroup and  $A, \quad B$ are  cyclic subgroups of  $G$ having order $2^k, 2^k - 1$ and $2^k + 1$ respectively. Also the number of conjugates of $P, A$ and $B$ in $G$ are  $2^k + 1, 2^{k - 1}(2^k + 1)$ and $2^{k - 1}(2^k - 1)$ respectively. Hence $\overline{{\mathcal{A}}_{G}}$ of $G$ is given by
\[
 (2^k + 1)K_{|xPx^{-1}| - 1} \sqcup 2^{k - 1}(2^k + 1)K_{|xAx^{-1}| - 1} \sqcup 2^{k - 1}(2^k - 1)K_{|xBx^{-1}| - 1}.
\]
That is, $\overline{{\mathcal{A}}}_{G} = (2^k + 1)K_{2^k - 1} \sqcup 2^{k - 1}(2^k + 1)K_{2^k - 2} \sqcup 2^{k - 1}(2^k - 1)K_{2^k}$. Therefore,
\[
e({\mathcal{A}}_G) = \frac{2^{6k} - 3.2^{4k} - 2^{3k} + 2.2^{2k} + 2^k}{2}.
\]
By Proposition 4.3 of \cite{lspac}, we have
\begin{align*}
\L-spec({\mathcal{A}}_{G}) = & \{0, {(2^{3k}-2^{k+1}-1)}^{2^{3k-1}-2^{2k}+2^{k-1}}, {(2^{3k}-2^{k+1})}^{2^{2k}-2^k-2},\\
 & {(2^{3k}-2^{k+1}+1)}^{2^{3k-1}-2^{2k}-3.2^{k-1}}, {(2^{3k}-2^k-1)}^{2^{2k}+2^k}\}.
\end{align*}
Now, $\left|0 - \frac{2|e({\mathcal{A}}_{G})|}{|v({\mathcal{A}}_G)|}\right| = \frac{2^{6k} - 3.2^{4k} - 2^{3k} + 2.2^{2k} + 2^k}{2^{3k} - 2^k - 1}$, $\left|2^{3k}-2^{k+1}-1 - \frac{2|e({\mathcal{A}}_G)|}{|v({\mathcal{A}}_{G})|}\right| = \frac{2^{3k} -2.2^k - 1}{2^{3k} - 2^k - 1}$,\\ $\left|2^{3k}-2^{k+1} - \frac{2|e({\mathcal{A}}_{G})|}{|v({\mathcal{A}}_{G})|}\right| = \frac{2^k}{2^{3k} - 2^k - 1}$, $\left|2^{3k}-2^{k+1}+1 - \frac{2|e({\mathcal{A}}_{G})|}{|v({\mathcal{A}}_{G})|}\right| = \frac{2^{3k} - 1}{2^{3k} - 2^k - 1}$\\ and  $\left|2^{3k}-2^k-1 - \frac{2|e({\mathcal{A}}_{G})|}{|v({\mathcal{A}}_{G})|}\right| = \frac{2 ^{4k} - 2^{3k} - 2^{2k} + 2^k + 1}{2^{3k} - 2^k - 1}$. By \eqref{Lenergy}, we have
\begin{align*}
LE({\mathcal{A}}_G) = & \frac{2^{6k} - 3.2^{4k} - 2^{3k} + 2.2^{2k} + 2^k}{2^{3k} - 2^k - 1} + (2^{3k-1}-2^{2k}+2^{k-1})\left(\frac{2^{3k} -2.2^k - 1}{2^{3k} - 2^k - 1}\right) \\
   & + (2^{2k}-2^k-2)\left(\frac{2^k}{2^{3k} - 2^k - 1}\right) + (2^{3k-1}-2^{2k}-3.2^{k-1})\left(\frac{2^{3k} - 1}{2^{3k} - 2^k - 1}\right) \\
   & + (2^{2k}+2^k)\left(\frac{2 ^{4k} - 2^{3k} - 2^{2k} + 2^k + 1}{2^{3k} - 2^k - 1}\right)
\end{align*}
and hence the result follows.
\end{proof}

\begin{proposition}
Let $G$ denotes the  general linear group  $GL(2, q)$, where $q = p^n > 2$ and $p$ is a prime. Then
\[
LE({\mathcal{A}}_G) = \frac{q^9 - 2q^8 - q^7 + 2q^6 + 2q^5 + q^4 - 4q^3 +2q^2 + q}{q^4 - q^3 - q^2 +1}.
\]

\end{proposition}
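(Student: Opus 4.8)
The plan is to follow the template of Propositions \ref{order-pq}--\ref{psl}: determine the centralizers of the non-central elements, read off the commuting graph $\overline{\mathcal{A}}_G$ as a disjoint union of complete graphs, compute $|e(\mathcal{A}_G)|$, import the Laplacian spectrum from \cite{lspac}, and evaluate \eqref{Lenergy} term by term. Here $|G| = |GL(2,q)| = q(q-1)^2(q+1)$, and $Z(G)$ is the group of nonzero scalar matrices, so $|Z(G)| = q-1$ and
\[
|v(\mathcal{A}_G)| = q(q-1)^2(q+1) - (q-1) = q^4 - q^3 - q^2 + 1 .
\]
It is reassuring that this equals the denominator in the claimed formula, since the average degree $\frac{2|e(\mathcal{A}_G)|}{|v(\mathcal{A}_G)|}$ carries exactly that denominator.

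The heart of the argument is the centralizer analysis, which I would organize by the rational canonical form of a $2\times 2$ matrix over $\mathbb{F}_q$. Every non-central element is $G$-conjugate to exactly one of three types: a split regular semisimple element $\mathrm{diag}(a,b)$ with $a\neq b$ in $\mathbb{F}_q^{\ast}$, whose centralizer is a split torus of order $(q-1)^2$; a single Jordan block $aI+N$, whose centralizer $\{cI+dN : c\in\mathbb{F}_q^{\ast},\ d\in\mathbb{F}_q\}$ has order $q(q-1)$; or an irreducible element, whose characteristic polynomial is irreducible over $\mathbb{F}_q$ and whose centralizer is a non-split torus of order $q^2-1$. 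Each of these centralizers is abelian, so, exactly as in \cite{Ab06}, the non-central elements partition into the sets $C_G(x)\setminus Z(G)$ and $\overline{\mathcal{A}}_G$ becomes a disjoint union of cliques. The number of distinct centralizers of each type is the number of conjugates, computed as a normalizer index: the split torus has normalizer of order $2(q-1)^2$, giving $\frac{q(q+1)}{2}$ copies; the non-split torus has normalizer of order $2(q^2-1)$, giving $\frac{q(q-1)}{2}$ copies; and the Jordan-type centralizer is normalized exactly by the Borel subgroup of order $q(q-1)^2$, giving $q+1$ copies. Subtracting $|Z(G)| = q-1$ from each centralizer order gives the clique sizes, so
\[
\overline{\mathcal{A}}_G = \tfrac{q(q+1)}{2}\,K_{(q-1)(q-2)} \sqcup (q+1)\,K_{(q-1)^2} \sqcup \tfrac{q(q-1)}{2}\,K_{q(q-1)} .
\]
As a check I would confirm that these clique sizes, weighted by their multiplicities, sum to $q^4 - q^3 - q^2 + 1$.

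From this decomposition, $\mathcal{A}_G$ is the complete multipartite graph on the parts above, so $|e(\mathcal{A}_G)| = \binom{|v(\mathcal{A}_G)|}{2} - |e(\overline{\mathcal{A}}_G)|$ is a finite sum of binomial coefficients, and its Laplacian spectrum (supplied by the corresponding proposition of \cite{lspac}) consists of $0$ once, $|v(\mathcal{A}_G)|$ with multiplicity $q^2+q$, and $|v(\mathcal{A}_G)| - n_i$ with multiplicity $n_i - 1$ for each clique of size $n_i$. It then remains to compute $\frac{2|e(\mathcal{A}_G)|}{|v(\mathcal{A}_G)|}$, form the five distinct terms $\left|\mu - \frac{2|e(\mathcal{A}_G)|}{|v(\mathcal{A}_G)|}\right|$, weight them by the multiplicities, and sum via \eqref{Lenergy}. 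I expect the one genuine obstacle to be bookkeeping rather than ideas: the leading behaviour of $\frac{2|e(\mathcal{A}_G)|}{|v(\mathcal{A}_G)|}$ nearly cancels against each $|v(\mathcal{A}_G)| - n_i$, so the three ``middle'' absolute values are of lower order and one must carefully determine the sign of the quantity inside each before collapsing everything to the closed form $\frac{q^9 - 2q^8 - q^7 + 2q^6 + 2q^5 + q^4 - 4q^3 + 2q^2 + q}{q^4 - q^3 - q^2 + 1}$. The hypothesis $q > 2$ enters precisely to guarantee the split-torus family is non-empty and the clique size $(q-1)(q-2)$ is positive.
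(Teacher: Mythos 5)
Your proposal is correct and follows essentially the same route as the paper: the same clique decomposition $\overline{\mathcal{A}}_G = \frac{q(q+1)}{2}K_{(q-1)(q-2)} \sqcup \frac{q(q-1)}{2}K_{q(q-1)} \sqcup (q+1)K_{(q-1)^2}$, the Laplacian spectrum imported from Proposition 4.4 of \cite{lspac}, and a term-by-term evaluation of \eqref{Lenergy}. The only cosmetic difference is that you re-derive the centralizer classification via rational canonical forms and normalizer indices, where the paper simply cites Proposition 3.26 of \cite{Ab06}; the resulting data (centralizer orders $(q-1)^2$, $q^2-1$, $q(q-1)$ with $\frac{q(q+1)}{2}$, $\frac{q(q-1)}{2}$, $q+1$ conjugates) agree exactly.
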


\begin{proof}
Clearly, $|v({\mathcal{A}}_G)| = q^4 - q^3 - q^2 +1$. We have $|G| = (q^2 -1)(q^2 - q)$ and $|Z(G)| = q - 1$. By Proposition 3.26 of  \cite{Ab06}, the set of centralizers of non-central elements of $GL(2, q)$ is given by
\[
\{xDx^{-1}, xIx^{-1}, xPZ(GL(2, q))x^{-1} : x \in GL(2, q)\}
\]
where $D$ is the subgroup of $GL(2, q)$ consisting  of all diagonal matrices, $I$ is a  cyclic subgroup of $GL(2, q)$ having order $q^2 - 1$  and $P$ is the Sylow $p$-subgroup of $GL(2, q)$ consisting of all upper triangular matrices with $1$ in the diagonal. The orders of  $D$ and $PZ(GL(2, q))$ are  $(q - 1)^2$ and $q(q - 1)$ respectively. Also   the number of conjugates of $D, I$ and $PZ(GL(2, q))$ in $GL(2, q)$  are  $\frac{q(q + 1)}{2}, \frac{q(q - 1)}{2}$ and $q + 1$ respectively. Hence the commuting graph of $GL(2, q)$ is given by
\[
 \frac{q(q + 1)}{2}K_{|xDx^{-1}| - q + 1} \sqcup \frac{q(q - 1)}{2}K_{|xIx^{-1}| - q + 1} \sqcup (q + 1)K_{|xPZ(GL(2, q))x^{-1}| - q + 1}.
\]
That is, ${\overline{{\mathcal{A}}}_G} = \frac{q(q + 1)}{2}K_{q^2 - 3q + 2} \sqcup \frac{q(q - 1)}{2}K_{q^2 - q} \sqcup (q + 1)K_{q^2 - 2q + 1}$. Hence $e({\mathcal{A}}_G) = \frac{q^8 - 2q^7 - 2q^6 + 5q^5 + q^4 - 4q^3 + q}{2}$. By Proposition 4.4 of \cite{lspac}, we have
\begin{align*}
\L-spec({\mathcal{A}}_G) = & \{0, {(q^4-q^3-2q^2+2q)}^{q^3-q^2-2q}, {(q^4-q^3-2q^2+q+1)}^{\frac {q^4-2q^3+q}{2}},\\ & {(q^4-q^3-2q^2+3q-1)}^{\frac {q^4-2q^3-2q^2+q}{2}}, {(q^4-q^3-q^2+1)}^{q^2+q}\}.
\end{align*}
Now, $\left|0 - \frac{2|e({\mathcal{A}}_{G})|}{|v({\mathcal{A}}_G)|}\right| = \frac{q^8 - 2q^7 - 2q^6 + 5q^5 + q^4 - 4q^3 + q}{q^4 - q^3 - q^2 +1}$, $\left|q^4-q^3-2q^2+2q - \frac{2|e({\mathcal{A}}_{G})|}{|v({\mathcal{A}}_{G})|}\right|$\\ $= \frac{q^3 - 2q^2 + q}{q^4 - q^3 - q^2 +1}$, $\left|q^4-q^3-2q^2+q+1 - \frac{2|e({\mathcal{A}}_G)|}{|v({\mathcal{A}}_{G})|}\right| = \frac{q^5 - 2q^4 - q^3 + 3q^2 - 1}{q^4 - q^3 - q^2 +1}$,\\  $\left|q^4-q^3-2q^2+3q-1 - \frac{2|e({\mathcal{A}}_{G})|}{|v({\mathcal{A}}_{G})|}\right| = \frac{q^5 - 2q^4 + q^3 - q^2 + 2q - 1}{q^4 - q^3 - q^2 +1}$ and  \\$\left|q^4-q^3-q^2+1 - \frac{2|e({\mathcal{A}}_{G})|}{|v({\mathcal{A}}_{G})|}\right|= \frac{q^6 - 3q^5 + 2q^4 + 2q^3 - q^2 - q + 1}{q^4 - q^3 - q^2 +1}$. By \eqref{Lenergy}, we have
\begin{align*}
  LE({\mathcal{A}}_G) = & \frac{q^8 - 2q^7 - 2q^6 + 5q^5 + q^4 - 4q^3 + q}{q^4 - q^3 - q^2 +1} + (q^3-q^2-2q)\left(\frac{q^3 - 2q^2 + q}{q^4 - q^3 - q^2 +1}\right)\\
  & + \left(\frac {q^4-2q^3+q}{2}\right)\left(\frac{q^5 - 2q^4 - q^3 + 3q^2 - 1}{q^4 - q^3 - q^2 +1}\right) \\
   & + \left(\frac{q^4-2q^3-2q^2+q}{2}\right)\left(\frac{q^5 - 2q^4 + q^3 - q^2 + 2q - 1}{q^4 - q^3 - q^2 +1}\right)
\end{align*}
and hence the result follows.
\end{proof}
\begin{proposition}\label{Hanaki1}
Let $F = GF(2^n), n \geq 2$ and $\vartheta$ be the Frobenius  automorphism of $F$, i. e., $\vartheta(x) = x^2$ for all $x \in F$. If $G$ denotes the group
\[
 \left\lbrace U(a, b) = \begin{bmatrix}
        1 & 0 & 0\\
        a & 1 & 0\\
        b & \vartheta(a) & 1
       \end{bmatrix} : a, b \in F \right\rbrace
\]
under matrix multiplication given by $U(a, b)U(a', b') = U(a + a', b + b' + a'\vartheta(a))$, then
\[
LE({\mathcal{A}}_G) = 2^{2n + 1} - 2^{n + 2}.
\]
\end{proposition}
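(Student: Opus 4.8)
The plan is to follow the template of the preceding propositions: determine the centralizers of the non-central elements, read off $\overline{{\mathcal{A}}}_G$ as a disjoint union of complete graphs, and then pass to the Laplacian spectrum of ${\mathcal{A}}_G$ and evaluate \eqref{Lenergy}. First I would record that $|G| = |F|^2 = 2^{2n}$, and compute the commutation relation directly from the given multiplication: $U(a,b)$ and $U(a',b')$ commute if and only if $a'\vartheta(a) = a\vartheta(a')$, which in characteristic $2$ becomes $aa'(a+a') = 0$. Consequently $U(a,b) \in Z(G)$ iff $aa'(a+a') = 0$ for every $a' \in F$; since $|F| = 2^n \geq 4$ we may pick $a' \notin \{0,a\}$, forcing $a = 0$. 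Thus $Z(G) = \{U(0,b) : b \in F\}$, $|Z(G)| = 2^n$, and $|v({\mathcal{A}}_G)| = 2^{2n} - 2^n$.

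Next, for a non-central $U(a,b)$ (so $a \neq 0$) the relation $aa'(a+a') = 0$ collapses to $a' \in \{0,a\}$, so that $C_G(U(a,b)) = Z(G) \sqcup U(a,b)Z(G)$ is an abelian subgroup of order $2^{n+1}$ depending only on $a$. There are exactly $2^n - 1$ such centralizers, one for each nonzero $a$, and their non-central parts partition $G \setminus Z(G)$ into $2^n - 1$ cliques of size $2^n$. Hence $\overline{{\mathcal{A}}}_G = (2^n - 1)K_{2^n}$, and counting the missing edges gives $|e({\mathcal{A}}_G)| = 2^{2n - 1}(2^n - 1)(2^n - 2)$.

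For the spectrum I would use the standard fact that if $H$ has $N$ vertices and Laplacian eigenvalues $0 = \mu_1 \leq \cdots \leq \mu_N$, then $\overline{H}$ has Laplacian eigenvalues $0$ together with $\{N - \mu_i : 2 \leq i \leq N\}$. Since $\overline{{\mathcal{A}}}_G = (2^n - 1)K_{2^n}$ has Laplacian spectrum $\{0^{2^n - 1}, (2^n)^{(2^n - 1)^2}\}$ and $N = 2^n(2^n - 1)$, this yields $\L-spec({\mathcal{A}}_G) = \{0, (2^{2n} - 2^{n+1})^{(2^n-1)^2}, (2^{2n} - 2^n)^{2^n - 2}\}$ (alternatively one cites the matching result of \cite{lspac}). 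The decisive simplification is that the average degree $\frac{2|e({\mathcal{A}}_G)|}{|v({\mathcal{A}}_G)|} = 2^n(2^n - 2) = 2^{2n} - 2^{n+1}$ equals the middle eigenvalue, so that large block contributes nothing. Then $|0 - (2^{2n} - 2^{n+1})| = 2^{2n} - 2^{n+1}$ and $|(2^{2n} - 2^n) - (2^{2n} - 2^{n+1})| = 2^n$, and \eqref{Lenergy} gives $LE({\mathcal{A}}_G) = (2^{2n} - 2^{n+1}) + (2^n - 2)2^n = 2^{2n+1} - 2^{n+2}$. I expect the only real obstacle to be the twisted-multiplication bookkeeping in the first step — pinning down the commutation condition $aa'(a+a') = 0$, and checking each centralizer is abelian and determined by $a$ alone; once $\overline{{\mathcal{A}}}_G$ is identified, the spectrum and the energy sum are routine.
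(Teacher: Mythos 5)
Your proposal is correct and takes essentially the same route as the paper: identify $Z(G) = \{U(0,b) : b \in F\}$ and the centralizers $Z(G) \sqcup U(a,0)Z(G)$ of non-central elements, conclude $\overline{{\mathcal{A}}}_G = (2^n - 1)K_{2^n}$, obtain the Laplacian spectrum $\{0, (2^{2n}-2^{n+1})^{(2^n-1)^2}, (2^{2n}-2^n)^{2^n-2}\}$ (the paper simply cites Proposition 4.5 of \cite{lspac}, which your complementation argument re-derives), and evaluate \eqref{Lenergy} using the fact that the average degree $2^{2n}-2^{n+1}$ kills the large eigenvalue block. The only differences are expository — you make explicit the commutation criterion $aa'(a+a')=0$ that the paper leaves implicit, and your edge count $2^{2n-1}(2^n-1)(2^n-2)$ agrees with the paper's $\frac{2^{4n}-3\cdot 2^{3n}+2\cdot 2^{2n}}{2}$.
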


\begin{proof}
It is clear that $v({\mathcal{A}}_G) = 2^n(2^n - 1)$. Note that $Z(G) = \{U(0, b) : b\in F\}$ and so $|Z(G)| = 2^n$. Let $U(a, b)$ be a non-central element of $G$. The centralizer of $U(a, b)$ in $G$ is $Z(G)\sqcup U(a, 0)Z(G)$. Hence $\overline{{\mathcal{A}}}_{G} = (2^n - 1)K_{2^n}$ and $|e({\mathcal{A}}_G)| = \frac{2^{4n} - 3.2^{3n} + 2.2^{2n}}{2}$. By Proposition 4.5 of \cite{lspac}, we have
\[
\L-spec({\mathcal{A}}_G) = \{ 0, {(2^{2n}-2^{n+1})}^{{(2^n-1)}^2}, {(2^{2n}-2^n)}^{2^n-2} \}.
\]
Therefore, $\left|0 - \frac{2|e({\mathcal{A}}_{G})|}{|v({\mathcal{A}}_G)|}\right| = 2^{2n} - 2.2^n$, $\left|2^{2n}-2^{n+1} - \frac{2|e({\mathcal{A}}_G)|}{|v({\mathcal{A}}_{G})|}\right| = 0$  and \\ $\left|2^{2n}-2^n - \frac{2|e({\mathcal{A}}_{G})|}{|v({\mathcal{A}}_{G})|}\right| = 2^n$. By \eqref{Lenergy}, we have
\[
LE({\mathcal{A}}_G) = 2^{2n} - 2.2^n + ({(2^n-1)}^2)0 + (2^n-2)2^n
\]
and hence the result follows.
\end{proof}
\begin{proposition}\label{Hanaki2}
Let $F = GF(p^n)$ where $p$ is a prime. If $G$ denotes    the  group
\[
\left\lbrace V(a, b, c) = \begin{bmatrix}
        1 & 0 & 0\\
        a & 1 & 0\\
        b & c & 1
       \end{bmatrix} : a, b, c \in F \right\rbrace
\]
under matrix multiplication $V(a, b, c)V(a', b', c') = V(a + a', b + b' + ca', c + c')$,
then
\[
LE({\mathcal{A}}_G) = 2(p^{3n} - p^{2n}).
\]
\end{proposition}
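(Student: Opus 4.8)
The plan is to follow the same route as Proposition \ref{Hanaki1}: determine the centralizers of the non-central elements, read off the commuting graph $\overline{\mathcal{A}}_G$ as a disjoint union of complete graphs, and then substitute the Laplacian spectrum supplied by the companion result in \cite{lspac} into \eqref{Lenergy}. First I would record the basic counts. The group $G$ is the Heisenberg group over $F$, so $|G| = p^{3n}$ (three free parameters $a,b,c \in F$). Using the given multiplication, $V(a,b,c)$ commutes with $V(a',b',c')$ precisely when $b+b'+ca' = b'+b+c'a$, i.e. when $ca' = ac'$. Letting $(a',c')$ range over all of $F^2$ forces $a = c = 0$ for a central element, so $Z(G) = \{V(0,b,0) : b \in F\}$ and $|Z(G)| = p^n$. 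Hence $|v(\mathcal{A}_G)| = p^{3n} - p^n$, as required.

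Next I would analyse the centralizers. For a non-central $V(a,b,c)$ (so $(a,c)\neq(0,0)$) the condition $ca' = ac'$ is a single nontrivial linear relation on $(a',c')$, whose solutions are exactly the multiples of $(a,c)$; together with the free parameter $b'$ this gives $|C_G(V(a,b,c))| = p^{2n}$. A short check shows each such centralizer is abelian, and that two non-central elements determine the same centralizer exactly when their $(a,c)$-pairs are proportional, so the distinct centralizers are indexed by the projective line $\mathbb{P}^1(F)$ and there are $p^n + 1$ of them, each containing $Z(G)$. Deleting the center leaves $p^{2n} - p^n$ non-central vertices in each, and $(p^n+1)(p^{2n}-p^n) = p^{3n}-p^n$ accounts for every vertex. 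Therefore
\[
\overline{\mathcal{A}}_G = (p^n + 1)K_{p^{2n} - p^n},
\]
so $\mathcal{A}_G$ is complete multipartite with $p^n+1$ equal parts, every vertex has degree $p^{3n}-p^{2n}$, and $|e(\mathcal{A}_G)| = \frac{1}{2}(p^{3n}-p^n)(p^{3n}-p^{2n})$.

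Finally I would substitute the spectrum. By Proposition 4.6 of \cite{lspac} one has $\L-spec(\mathcal{A}_G) = \{0, (p^{3n}-p^{2n})^{p^{3n}-2p^n-1}, (p^{3n}-p^n)^{p^n}\}$. The key simplification is that $\frac{2|e(\mathcal{A}_G)|}{|v(\mathcal{A}_G)|} = p^{3n}-p^{2n}$ coincides with the middle eigenvalue, so that term contributes nothing to \eqref{Lenergy}. The surviving contributions are $\left|0 - (p^{3n}-p^{2n})\right| = p^{3n}-p^{2n}$ from the zero eigenvalue and $p^n\left|(p^{3n}-p^n)-(p^{3n}-p^{2n})\right| = p^n(p^{2n}-p^n) = p^{3n}-p^{2n}$ from the top eigenvalue, and these sum to $2(p^{3n}-p^{2n})$.

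The only genuinely substantive step is the centralizer analysis in the second paragraph: verifying that the bilinear relation $ca' = ac'$ produces abelian centralizers all of order $p^{2n}$ and that exactly $p^n+1$ of them occur. Once $\overline{\mathcal{A}}_G$ is identified the rest is the same bookkeeping as in Proposition \ref{Hanaki1}, and indeed the cancellation of the average-degree term mirrors that proof exactly.
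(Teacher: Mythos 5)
Your proof is correct and follows essentially the same route as the paper: identify $Z(G)$, show the centralizers of non-central elements are abelian of order $p^{2n}$ so that $\overline{\mathcal{A}}_G = (p^n+1)K_{p^{2n}-p^n}$, then plug Proposition 4.6 of \cite{lspac} into \eqref{Lenergy} and observe that the middle eigenvalue equals $\frac{2|e(\mathcal{A}_G)|}{|v(\mathcal{A}_G)|} = p^{3n}-p^{2n}$ and so contributes nothing. The only difference is presentational: your uniform indexing of the $p^n+1$ centralizers by lines in $F^2$ (the relation $ca'=ac'$) replaces the paper's explicit three-case list of centralizer families, which amounts to the same decomposition.
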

\begin{proof}
Clearly, $v({\mathcal{A}}_G) = p^n(p^{2n} - 1)$. We have $Z(G) = \{V(0, b, 0) : b \in F\}$ and so $|Z(G)| = p^n$. The centralizers of non-central elements of $A(n, p)$ are given by
\begin{enumerate}
\item If $b, c \in F$ and $c \ne 0$ then the centralizer of $V(0, b, c)$ in $G$ is\\ $\{V(0, b', c') : b', c' \in F\}$ having order $p^{2n}$.
\item If $a, b \in F$ and $a \ne 0$ then the centralizer of $V(a, b, 0)$ in $G$ is\\ $\{V(a', b', 0) : a', b' \in F\}$ having order $p^{2n}$.
\item If $a, b, c \in F$ and $a \ne 0, c \ne 0$ then the centralizer of $V(a, b, c)$ in $G$ is \\$\{V(a', b', ca'a^{-1}) : a', b' \in F\}$ having order $p^{2n}$.
\end{enumerate}
It can be seen that all the centralizers of non-central elements of $A(n, p)$ are abelian. Hence,
\[
\overline{{\mathcal{A}}}_G = K_{p^{2n} - p^n}\sqcup K_{p^{2n} - p^n}\sqcup (p^n - 1)K_{p^{2n} - p^n} = (p^n + 1)K_{p^{2n} - p^n}.
\]
and $e({\mathcal{A}}_G) = \frac{p^{6n} - p^{5n} - p^{4n} + p^{3n}}{2}$. By Proposition 4.6 of \cite{lspac}, we have
\[
\L-spec({\mathcal{A}}_{A(n, p)}) = \{ 0, {(p^{3n}-p^{2n})}^{p^{3n}-2p^{n}-1}, {(p^{3n}-p^{n})}^{p^n} \}.
\]
Therefore, $\left|0 - \frac{2|e({\mathcal{A}}_{G})|}{|v({\mathcal{A}}_G)|}\right| = p^{3n} - p^{2n}$, $\left|p^{3n}-p^{2n} - \frac{2|e({\mathcal{A}}_G)|}{|v({\mathcal{A}}_{G})|}\right| = 0$  and \\ $\left|p^{3n}-p^{n} - \frac{2|e({\mathcal{A}}_{G})|}{|v({\mathcal{A}}_{G})|}\right| = p^{2n} - p^n$. By \eqref{Lenergy}, we have
\[
LE({\mathcal{A}}_G) = p^{3n} - p^{2n} + (p^{3n}-2p^{n}-1)0 + p^n(p^{2n} - p^n)
\]
and hence the result follows.
\end{proof}

\section{Some consequences}

For a finite group $G$, the set $C_G(x) = \{y \in G : xy = yx\}$ is called the centralizer of an element $x \in G$. Let $|\cent(G)| = |\{C_G(x) : x \in G\}|$, that is the number of distinct centralizers in $G$. A group $G$ is called an $n$-centralizer group if $|\cent(G)| = n$. The study of these groups was initiated by  Belcastro and  Sherman   \cite{bG94} in the year 1994. The readers may conf. \cite{Dutta10} for various results on these groups. In this section, we compute Laplacian energy of the non-commuting graphs of non-abelian  $n$-centralizer finite groups for some positive integer $n$.   We begin with the following result.

\begin{proposition}\label{4-cent}
If $G$ is a finite $4$-centralizer group, then
\[
LE({\mathcal{A}}_G) = 4|Z(G)|.
\]
\end{proposition}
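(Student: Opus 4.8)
The plan is to reduce this proposition to Theorem \ref{main2} by invoking the structural classification of $4$-centralizer groups. The key group-theoretic fact I would use is the theorem of Belcastro and Sherman \cite{bG94}: a finite group $G$ satisfies $|\cent(G)| = 4$ if and only if $\frac{G}{Z(G)} \cong {\mathbb{Z}}_2 \times {\mathbb{Z}}_2$. Thus the hypothesis that $G$ is a $4$-centralizer group is exactly equivalent to the central factor being the Klein four-group, which places $G$ squarely in the family treated by Theorem \ref{main2} with the prime $p = 2$.

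First I would recall this characterization and note that it identifies the present situation as the special case $p = 2$ of the setting $\frac{G}{Z(G)} \cong {\mathbb{Z}}_p \times {\mathbb{Z}}_p$. Next I would simply apply Theorem \ref{main2}, which gives $LE({\mathcal{A}}_G) = 2p(p-1)|Z(G)|$ for such groups. Substituting $p = 2$ yields
\[
LE({\mathcal{A}}_G) = 2 \cdot 2 \cdot (2 - 1) \cdot |Z(G)| = 4|Z(G)|,
\]
which is the claimed formula. No separate computation of the centralizer structure, the edge count, or the Laplacian spectrum is needed here, since all of that is already packaged inside Theorem \ref{main2}.

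The only nontrivial ingredient, and hence the main obstacle, is the correct citation and use of the Belcastro--Sherman classification linking the combinatorial invariant $|\cent(G)| = 4$ to the isomorphism type $\frac{G}{Z(G)} \cong {\mathbb{Z}}_2 \times {\mathbb{Z}}_2$; everything after that is a one-line substitution. I would therefore state the characterization explicitly (with reference to \cite{bG94}, and perhaps \cite{Dutta10} for a survey), verify that the $p = 2$ instance of Theorem \ref{main2} is genuinely applicable (in particular that $Z(G)$ is the same center appearing in both statements and that $G$ is non-abelian, which is automatic once $\frac{G}{Z(G)}$ is nontrivial), and then conclude.
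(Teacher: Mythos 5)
Your proposal is correct and matches the paper's proof essentially verbatim: the paper likewise cites \cite[Theorem 2]{bG94} to conclude $\frac{G}{Z(G)} \cong {\mathbb{Z}}_2 \times {\mathbb{Z}}_2$ and then invokes Theorem \ref{main2} with $p = 2$. The only cosmetic difference is that you state the Belcastro--Sherman result as an equivalence, whereas only the forward implication is needed.
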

\begin{proof}
Let $G$ be a finite $4$-centralizer group. Then, by  \cite[Theorem 2]{bG94}, we have  $\frac{G}{Z(G)} \cong {\mathbb{Z}}_2 \times {\mathbb{Z}}_2$. Therefore, by Theorem \ref{main2}, the result follows.
\end{proof}

\noindent Further, we have the following result.

\begin{corollary}
If $G$ is a  finite $(p+2)$-centralizer $p$-group for any prime $p$,  then
\[
LE({\mathcal{A}}_G) = 2p(p - 1)|Z(G)|.
\]
\end{corollary}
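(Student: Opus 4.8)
The plan is to reduce the claim to the already-established Theorem \ref{main2}, so the entire argument hinges on a single structural fact from the theory of $n$-centralizer groups: that a finite $(p+2)$-centralizer $p$-group $G$ must satisfy $\frac{G}{Z(G)} \cong \mathbb{Z}_p \times \mathbb{Z}_p$. First I would recall that $|\cent(G)|$ counts distinct centralizers, and that for any group $|\cent(G)| \neq 2, 3$; more to the point, for a $p$-group the central quotient $\frac{G}{Z(G)}$ controls $|\cent(G)|$ directly, since distinct centralizers of non-central elements correspond to the proper ``lines'' (maximal cyclic-type structure) in $\frac{G}{Z(G)}$. So the key step is to invoke the known classification result identifying $(p+2)$-centralizer $p$-groups with those having $\frac{G}{Z(G)} \cong \mathbb{Z}_p \times \mathbb{Z}_p$.

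Concretely, I would argue as follows. When $\frac{G}{Z(G)} \cong \mathbb{Z}_p \times \mathbb{Z}_p$, the computation inside the proof of Theorem \ref{main2} already exhibits exactly $p+1$ distinct centralizers of non-central elements (namely $C_G(a)$ together with $C_G(a^j b)$ for $1 \le j \le p$), and adjoining $C_G(z) = G$ for central $z$ gives a total of $p+2$ distinct centralizers. Hence every group with $\frac{G}{Z(G)} \cong \mathbb{Z}_p \times \mathbb{Z}_p$ is automatically $(p+2)$-centralizer. The substance of the corollary is the converse for $p$-groups: one must show that a $(p+2)$-centralizer $p$-group cannot have a larger or differently structured central quotient. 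This converse is exactly the type of statement established in the references \cite{bG94, Dutta10} on $n$-centralizer groups, paralleling the way Proposition \ref{4-cent} cited \cite[Theorem 2]{bG94} to pin down $\frac{G}{Z(G)} \cong \mathbb{Z}_2 \times \mathbb{Z}_2$ in the case $p = 2$.

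Once the isomorphism $\frac{G}{Z(G)} \cong \mathbb{Z}_p \times \mathbb{Z}_p$ is in hand, the conclusion is immediate: Theorem \ref{main2} yields
\[
LE({\mathcal{A}}_G) = 2p(p-1)|Z(G)|,
\]
which is precisely the asserted formula, with no further computation required. I would therefore structure the final proof as two sentences: one citing the classification to obtain the central quotient, and one applying Theorem \ref{main2}.

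The main obstacle I anticipate is purely a matter of citation rather than calculation: I must locate (or supply) the precise statement in \cite{Dutta10} or \cite{bG94} that characterizes $(p+2)$-centralizer $p$-groups by $\frac{G}{Z(G)} \cong \mathbb{Z}_p \times \mathbb{Z}_p$ for general primes $p$, since the cleanly-stated \cite[Theorem 2]{bG94} handles only the $4$-centralizer ($p=2$) case. If no such general result is available off the shelf, the fallback is a direct counting argument: a $p$-group with $|\cent(G)| = p+2$ has exactly $p+1$ non-central centralizers, each of which is a maximal abelian subgroup, and the covering of $G \setminus Z(G)$ by these subgroups forces $\frac{G}{Z(G)}$ to be elementary abelian of rank $2$. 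Verifying that this counting genuinely excludes all other central quotients (in particular ruling out ranks higher than $2$ or non-elementary-abelian quotients producing the same count) is where the care is needed, but it remains a self-contained group-theoretic exercise disjoint from the Laplacian-energy machinery.
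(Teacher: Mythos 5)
Your proposal takes essentially the same route as the paper: the paper's proof is exactly the two-sentence reduction you describe, citing a classification of $(p+2)$-centralizer $p$-groups to get $\frac{G}{Z(G)} \cong \mathbb{Z}_p \times \mathbb{Z}_p$ and then invoking Theorem \ref{main2}. The citation you were missing is \cite[Lemma 2.7]{ali00} (Ashrafi, \emph{On finite groups with a given number of centralizers}), which supplies the general-$p$ statement, so your fallback counting argument is unnecessary.
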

\begin{proof}
Let $G$ be a finite $(p + 2)$-centralizer $p$-group. Then, by   \cite[Lemma 2.7]{ali00}, we have  $\frac{G}{Z(G)} \cong {\mathbb{Z}}_p \times {\mathbb{Z}}_p$. Therefore, by Theorem \ref{main2}, the result follows.
\end{proof}

\begin{proposition}\label{5-cent}
If $G$ is a  finite $5$-centralizer  group, then

\[
LE({\mathcal{A}}_G) = 12|Z(G)| \text{ or } \frac{18{|Z(G)|}^2 + 27|Z(G)|}{5}.
\]

\end{proposition}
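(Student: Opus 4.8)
The plan is to reduce the statement to the two structural theorems proved above by invoking the known classification of finite $5$-centralizer groups. First I would recall (see \cite{Dutta10}) that if $G$ is a finite $5$-centralizer group, then either $\frac{G}{Z(G)} \cong \mathbb{Z}_3 \times \mathbb{Z}_3$ or $\frac{G}{Z(G)} \cong S_3 \cong D_6$, where $D_6$ denotes the dihedral group of order $6$ in the convention of Theorem \ref{main4}. This dichotomy is precisely what produces the two possible values in the statement, so the argument splits into these two cases, each of which is a direct specialisation of a formula already established.

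In the first case, $\frac{G}{Z(G)} \cong \mathbb{Z}_3 \times \mathbb{Z}_3$ is the group $\mathbb{Z}_p \times \mathbb{Z}_p$ with $p = 3$, so Theorem \ref{main2} applies and gives $LE({\mathcal{A}}_G) = 2p(p-1)|Z(G)| = 12|Z(G)|$, the first listed value. In the second case, $\frac{G}{Z(G)} \cong D_6$ is $D_{2m}$ with $m = 3$, so Theorem \ref{main4} applies; substituting $m = 3$ into $\frac{(2m^2 - 3m)(m-1){|Z(G)|}^2 + m(4m-3)|Z(G)|}{2m-1}$ yields numerator $18{|Z(G)|}^2 + 27|Z(G)|$ over denominator $5$, which is exactly the second listed value.

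There is no genuine computational obstacle here: once the classification is in hand, both values fall out immediately from Theorems \ref{main2} and \ref{main4}, with the arithmetic reducing to a single substitution in each case. The only step that requires care is quoting the classification correctly and, in particular, identifying $S_3$ with $D_{2m}$ for $m = 3$ so that the second case is covered by Theorem \ref{main4} rather than needing a fresh centralizer computation. As a sanity check one can verify that both $\mathbb{Z}_3 \times \mathbb{Z}_3$ and $S_3$ do realise exactly five distinct centralizers --- in the former, the whole group together with the $p+1 = 4$ maximal abelian centralizers from the proof of Theorem \ref{main2}, and in the latter, the whole group together with the $m = 3$ reflection centralizers and the single rotation centralizer from the proof of Theorem \ref{main4} --- which confirms that these are genuinely the only two cases to treat.
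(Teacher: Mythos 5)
Your proposal is correct and follows essentially the same route as the paper: invoke the classification of finite $5$-centralizer groups to get $\frac{G}{Z(G)} \cong {\mathbb{Z}}_3 \times {\mathbb{Z}}_3$ or $D_6$, then specialise Theorem \ref{main2} with $p = 3$ and Theorem \ref{main4} with $m = 3$, exactly as the paper does. The only discrepancy is bibliographic: the paper attributes the classification to Belcastro and Sherman \cite[Theorem 4]{bG94} rather than to \cite{Dutta10}, so you should adjust that citation.
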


\begin{proof}
Let $G$ be a finite $5$-centralizer group. Then by  \cite[Theorem 4]{bG94}, we have  $\frac{G}{Z(G)} \cong {\mathbb{Z}}_3 \times {\mathbb{Z}}_3$ or $D_6$. Now, if $\frac{G}{Z(G)} \cong {\mathbb{Z}}_3 \times {\mathbb{Z}}_3$, then  by Theorem \ref{main2}, we have
\[
LE({\mathcal{A}}_G) = 12|Z(G)|.
\]
If $\frac{G}{Z(G)} \cong D_6$, then by Theorem \ref{main4} we have
\[
LE({\mathcal{A}}_G) = \frac{18{|Z(G)|}^2 + 27|Z(G)|}{5}.
\]

\end{proof}


Let $G$ be a finite group. The commutativity degree of $G$ is given by the ratio
\[
\Pr(G) = \frac{|\{(x, y) \in G \times G : xy = yx\}|}{|G|^2}.
\]
The origin of the commutativity degree of a finite group lies in a paper of Erd$\ddot{\rm o}$s and Tur$\acute{\rm a}$n (see \cite{Et68}). Readers may conf. \cite{Caste10,Dnp13,Nath08} for various results on $\Pr(G)$. In  the following few results we shall compute various energies of the commuting graphs of finite non-abelian groups $G$ such that $\Pr(G) = r$ for some rational number $r$.

\begin{proposition}
Let $G$ be a finite group and $p$ the smallest prime divisor of $|G|$. If $\Pr(G) = \frac{p^2 + p - 1}{p^3}$, then

\[
LE({\mathcal{A}}_G) = 2p(p - 1)|Z(G)|.
\]

\end{proposition}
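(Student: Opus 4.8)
The prescribed value $\frac{p^2+p-1}{p^3}$ is exactly the commutativity degree of a group with central quotient $\mathbb{Z}_p\times\mathbb{Z}_p$, and the asserted energy is precisely the output of Theorem \ref{main2}. So the plan is to deduce from the hypothesis that $\frac{G}{Z(G)}\cong\mathbb{Z}_p\times\mathbb{Z}_p$, and then to quote Theorem \ref{main2} directly.

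First I would note that $\Pr(G)<1$, so $G$ is non-abelian, and rewrite the commutativity degree through centralizers as $\Pr(G)=\frac{1}{|G|^2}\sum_{x\in G}|C_G(x)|$. I would split this sum according to whether $x$ lies in $Z(G)$: each central $x$ contributes $|G|$, while for each non-central $x$ the conjugacy class size $[G:C_G(x)]$ exceeds $1$ and divides $|G|$, hence is at least $p$, so that $|C_G(x)|\le|G|/p$. Writing $t=|Z(G)|/|G|=|G/Z(G)|^{-1}$, these estimates combine to give
\[
\Pr(G)\le t+\frac1p(1-t)=\frac1p+t\cdot\frac{p-1}{p}.
\]

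Next I would invoke the standard fact that $G/Z(G)$ is never cyclic, whence $|G/Z(G)|\ge p^2$ and therefore $t\le p^{-2}$. Substituting yields $\Pr(G)\le\frac1p+\frac{p-1}{p^3}=\frac{p^2+p-1}{p^3}$. The hypothesis is equality throughout this chain, which forces $t=p^{-2}$, i.e. $|G/Z(G)|=p^2$. A group of order $p^2$ is abelian and, not being cyclic, must be $\mathbb{Z}_p\times\mathbb{Z}_p$; thus $\frac{G}{Z(G)}\cong\mathbb{Z}_p\times\mathbb{Z}_p$, and Theorem \ref{main2} then delivers $LE({\mathcal{A}}_G)=2p(p-1)|Z(G)|$.

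The routine part is the upper bound; the delicate point, and the real content of the statement, is that equality is attained only in the extremal configuration. This is exactly where the hypothesis that $p$ is the \emph{smallest} prime divisor of $|G|$ is indispensable: it is what makes $[G:C_G(x)]\ge p$ the sharp estimate and pins $t$ down to $p^{-2}$. As a consistency check I would also verify the converse, recomputing $\Pr(G)=\frac{p^2+p-1}{p^3}$ directly from the centralizer data underlying Theorem \ref{main2} (every non-central centralizer having order $p|Z(G)|$), to confirm that the value is genuinely achieved and the characterization is tight.
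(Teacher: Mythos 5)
Your argument is correct, and its overall skeleton matches the paper's: reduce the hypothesis on $\Pr(G)$ to the structural conclusion $\frac{G}{Z(G)}\cong{\mathbb{Z}}_p\times{\mathbb{Z}}_p$, then invoke Theorem \ref{main2}. The difference lies entirely in how that reduction is obtained. The paper dispatches it in one line by citing MacHale's classification \cite[Theorem 3]{dM74}, whereas you reprove that input from first principles: the centralizer-sum expression $\Pr(G)=\frac{1}{|G|^2}\sum_{x\in G}|C_G(x)|$, the bound $|C_G(x)|\le |G|/p$ for non-central $x$ (valid because every index exceeding $1$ that divides $|G|$ is at least the smallest prime divisor $p$), and the fact that $G/Z(G)$ is never cyclic, which together with the divisibility of $|G/Z(G)|$ by primes $\ge p$ gives $|G/Z(G)|\ge p^2$ and hence the sharp bound $\Pr(G)\le\frac{p^2+p-1}{p^3}$; your equality analysis then correctly pins $|G/Z(G)|=p^2$, and a non-cyclic abelian group of that order must be ${\mathbb{Z}}_p\times{\mathbb{Z}}_p$. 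What the paper's route buys is brevity and an explicit pointer to the literature; what yours buys is self-containedness and a transparent explanation of exactly where the smallest-prime hypothesis is used twice (once in the centralizer bound, once in $|G/Z(G)|\ge p^2$) --- a point the citation hides. Your closing converse check, recomputing $\Pr(G)$ from the centralizer data of Theorem \ref{main2} (each non-central centralizer has order $p|Z(G)|$, giving $\frac{p^2|Z(G)|^2+(p^2-1)p|Z(G)|^2}{p^4|Z(G)|^2}=\frac{p^2+p-1}{p^3}$), is not logically required for the proposition but confirms the bound is attained; everything you assert is sound.
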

\begin{proof}
If $\Pr(G) = \frac{p^2 + p - 1}{p^3}$, then by \cite[Theorem 3]{dM74}, we have $\frac{G}{Z(G)}$ is isomorphic to ${\mathbb{Z}}_p\times {\mathbb{Z}}_p$. Hence the result follows from  Theorem \ref{main2}.
\end{proof}
As a corollary we have
\begin{corollary}
Let $G$ be a finite group such that $\Pr(G) = \frac{5}{8}$. Then
\[
LE({\mathcal{A}}_G) = 4|Z(G)|.
\]
\end{corollary}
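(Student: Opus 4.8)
The plan is to recognize the value $\frac{5}{8}$ as the special instance $p = 2$ of the formula in the immediately preceding proposition, and then invoke that result directly. First I would observe that the rational number $\frac{5}{8}$ is exactly $\frac{p^2 + p - 1}{p^3}$ at $p = 2$, since $\frac{2^2 + 2 - 1}{2^3} = \frac{5}{8}$. Thus the hypothesis $\Pr(G) = \frac{5}{8}$ coincides with the hypothesis of the previous proposition for $p = 2$, provided one can certify that $2$ is the smallest prime divisor of $|G|$.

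That certification is the only point requiring genuine argument, and I would handle it as follows. Using the standard identity $\Pr(G) = \frac{k(G)}{|G|}$, where $k(G)$ is the number of conjugacy classes of $G$ (equivalently, counting the commuting pairs as $\sum_{x \in G}|C_G(x)| = |G|\,k(G)$), the hypothesis gives $\frac{k(G)}{|G|} = \frac{5}{8}$, hence $5|G| = 8k(G)$. Since $\gcd(5,8) = 1$, this forces $8 \mid |G|$; in particular $|G|$ is even, so $2$ is indeed its smallest prime divisor. This is exactly the consistency condition needed to apply the previous proposition with $p = 2$.

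With $p = 2$ confirmed as the smallest prime divisor and $\Pr(G) = \frac{p^2 + p - 1}{p^3}$, the preceding proposition applies verbatim and yields
\[
LE({\mathcal{A}}_G) = 2p(p-1)|Z(G)| = 2\cdot 2\cdot 1\cdot |Z(G)| = 4|Z(G)|,
\]
as claimed. I expect no serious obstacle here: the substantive content is entirely inherited from the previous proposition (and ultimately from Theorem \ref{main2}, via $\frac{G}{Z(G)} \cong {\mathbb{Z}}_2 \times {\mathbb{Z}}_2$). The only care needed is the small divisibility check showing that the prescribed commutativity degree forces $2$ to be the smallest prime divisor, after which the result is immediate with no further computation.
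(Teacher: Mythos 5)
Your proposal is correct and follows essentially the same route as the paper, which derives the corollary by specializing the preceding proposition to $p = 2$ via $\frac{2^2 + 2 - 1}{2^3} = \frac{5}{8}$. Your extra step --- using $\Pr(G) = \frac{k(G)}{|G|}$ to deduce $8 \mid |G|$ and hence that $2$ is the smallest prime divisor of $|G|$ --- is a sound verification of a hypothesis the paper leaves implicit, and strengthens rather than alters the argument.
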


\begin{proposition}
If $\Pr(G) \in \{\frac{5}{14}, \frac{2}{5}, \frac{11}{27}, \frac{1}{2}\}$, then
\[
LE({\mathcal{A}}_G) = 9,\frac{28}{3},25 \text{ or } \frac{126}{5}.
\]
\end{proposition}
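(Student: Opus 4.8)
The plan is to proceed exactly as in the preceding propositions: from the numerical value of $\Pr(G)$ I would read off the isomorphism type of the central quotient $\frac{G}{Z(G)}$ using the classification of finite groups by commutativity degree, and then substitute into Theorem \ref{main2} or Theorem \ref{main4}. The orienting observation is that, by Corollary \ref{main005}, the four numbers $9,\ \frac{28}{3},\ 25,\ \frac{126}{5}$ are precisely $LE(\mathcal{A}_{D_{2m}})$ for $m=3,4,5,6$; so the content of the statement is that each listed commutativity degree forces $G$ into the dihedral (or $\mathbb{Z}_{p}\times\mathbb{Z}_{p}$) type realising the corresponding energy, after which the value is simply read off.

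To locate those types I would first record, by a direct conjugacy-class count, the commutativity degree attached to each admissible quotient. When $\frac{G}{Z(G)}\cong D_{2m}$ with $m$ odd one gets $\Pr(G)=\frac{m+3}{4m}$, so $\frac{1}{2}$ and $\frac{2}{5}$ single out $m=3$ and $m=5$ and, through the odd branch of Corollary \ref{main005}, yield the energies $9$ and $25$; the same formula sends $\frac{5}{14}$ to $m=7$. The value $\frac{11}{27}=\frac{3^{2}+3-1}{3^{3}}$ is treated as in the immediately preceding proposition: \cite[Theorem 3]{dM74} gives $\frac{G}{Z(G)}\cong\mathbb{Z}_{3}\times\mathbb{Z}_{3}$, and Theorem \ref{main2} with $p=3$ evaluates the energy as a multiple of $|Z(G)|$. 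Finally, the two fractional targets $\frac{28}{3}$ and $\frac{126}{5}$ are exactly the outputs of the even branch of Corollary \ref{main005} at $m=4$ and $m=6$, i.e.\ of the even-order dihedral groups $D_{8}$ and $D_{12}$.

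The step I expect to be the real obstacle is this passage from a rational commutativity degree to one isomorphism type and, above all, to one value of $|Z(G)|$. Because $\Pr(G)$ is an isoclinism invariant it cannot by itself determine $|Z(G)|$, yet the formulas in Theorems \ref{main2} and \ref{main4} each retain an explicit factor of $|Z(G)|$; producing a bare number thus demands a classification sharp enough to fix $G$ up to its minimal admissible centre, which for the four cases means identifying $G$ with $D_{6},D_{8},D_{10},D_{12}$. Reconciling the four stated degrees with these four representatives is the crux: the naive odd-dihedral matching already over-shoots, sending $\frac{5}{14}$ to the energy $49$ rather than to a listed value, and $\frac{11}{27}$ to $12|Z(G)|$, so one must invoke the classification in the sharpened form that selects the correct centre. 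The even-order cases are the most delicate, since there $\frac{G}{Z(G)}$ is itself a dihedral group with non-trivial centre, forcing one to apply Theorem \ref{main4} with the quotient parameter halved and $|Z(G)|=2$, precisely the manoeuvre behind the even branch of Corollary \ref{main05}. Once the correct pair $(m,|Z(G)|)$ is secured, the remainder is the routine substitution carried out in every earlier proof.
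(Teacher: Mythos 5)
Your proposal follows the same general route as the paper---deduce the isomorphism type of $\frac{G}{Z(G)}$ from the value of $\Pr(G)$ and then substitute into Theorem~\ref{main2} or Theorem~\ref{main4}---but it never completes the one step that carries all the content. The paper disposes of that step by citation alone: by \cite[pp. 246]{Rusin79} and \cite[pp. 451]{Nath13}, the four listed values of $\Pr(G)$ force $\frac{G}{Z(G)}$ to be one of $D_6, D_8, D_{10}, D_{14}$, after which the four energies are read off Corollary~\ref{main005}. You instead try to regenerate the classification from the count $\Pr(G)=\frac{m+3}{4m}$ (for $\frac{G}{Z(G)}\cong D_{2m}$, $m$ odd) together with \cite[Theorem 3]{dM74}; this correctly sends $\frac{1}{2}\mapsto m=3$ and $\frac{2}{5}\mapsto m=5$, but it also sends $\frac{5}{14}$ to $D_{14}$ (energy $49$) and $\frac{11}{27}$ to $\mathbb{Z}_3\times\mathbb{Z}_3$ (energy $12|Z(G)|$), neither of which is a listed value---as you yourself concede. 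At that point you appeal to ``the classification in the sharpened form that selects the correct centre.'' No such sharpened classification is stated, none is cited, and by your own (correct) observation none can exist: $\Pr(G)$ is an isoclinism invariant and cannot bound $|Z(G)|$; for instance $\mathbb{Z}_k\times S_3$ has $\Pr=\frac{1}{2}$ and $|Z(G)|=k$ for every $k$, and Theorem~\ref{main4} then returns $\frac{18k^2+27k}{5}$, which leaves the stated list as soon as $k\geq 3$. So the crux of your plan---securing the pair $(m,|Z(G)|)$, and in particular producing $\frac{28}{3}$ and $\frac{126}{5}$ from $D_8$ and $D_{12}$, whose commutativity degrees $\frac{5}{8}$ and $\frac{1}{2}$ do not single them out among the hypotheses---is left entirely open. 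The proposal is therefore not a proof but a demonstration that this route, pushed honestly, does not close.

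To be fair, the obstruction you hit is not of your making: the paper's own two-line proof passes from ``$\frac{G}{Z(G)}$ is one of $D_6, D_8, D_{10}, D_{14}$'' directly to Corollary~\ref{main005}, which computes $LE({\mathcal{A}}_{D_{2m}})$ for the dihedral groups \emph{themselves}, i.e.\ it tacitly evaluates the formulas at one particular value of $|Z(G)|$ with no justification; and even on those terms $D_{14}$ yields $49$, not $\frac{126}{5}$, so your reverse-engineered matching $m=3,4,5,6$ pinpoints exactly where the citation-based argument is being used beyond what the cited classifications deliver. But judged as a reconstruction of the paper's proof, what is missing from your text is concrete: the explicit appeal to the classifications in \cite{Rusin79} and \cite{Nath13} for the central quotient, and a valid argument fixing $|Z(G)|$ before Theorems~\ref{main2} and~\ref{main4} can output bare numbers. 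Flagging the obstacle, as you do, is not the same as overcoming it.
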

\begin{proof}
If $\Pr(G) \in \{\frac{5}{14}, \frac{2}{5}, \frac{11}{27}, \frac{1}{2}\}$, then as shown in \cite[pp. 246]{Rusin79} and \cite[pp. 451]{Nath13}, we have $\frac{G}{Z(G)}$ is isomorphic to one of the groups in $\{D_6, D_8, D_{10}  D_{14}\}$. Hence the result follows from Corollary \ref{main005}.
\end{proof}

\begin{proposition}\label{order16}
Let $G$ be a group isomorphic to any of the following groups
\begin{enumerate}
\item ${\mathbb{Z}}_2 \times D_8$
\item ${\mathbb{Z}}_2 \times Q_8$
\item $M_{16}  = \langle a, b : a^8 = b^2 = 1, bab = a^5 \rangle$
\item ${\mathbb{Z}}_4 \rtimes {\mathbb{Z}}_4 = \langle a, b : a^4 = b^4 = 1, bab^{-1} = a^{-1} \rangle$
\item $D_8 * {\mathbb{Z}}_4 = \langle a, b, c : a^4 = b^2 = c^2 =  1, ab = ba, ac = ca, bc = a^2cb \rangle$
\item $SG(16, 3)  = \langle a, b : a^4 = b^4 = 1, ab = b^{-1}a^{-1}, ab^{-1} = ba^{-1}\rangle$.
\end{enumerate}
Then
\[
LE({\mathcal{A}}_G) = 16.
\]

\end{proposition}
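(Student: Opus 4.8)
The plan is to reduce all six cases to a single application of Theorem~\ref{main2}. Every group in the list has order $16$, is non-abelian, and (as I will verify) satisfies $|Z(G)| = 4$ with $\frac{G}{Z(G)} \cong \mathbb{Z}_2 \times \mathbb{Z}_2$. Granting this, Theorem~\ref{main2} with $p = 2$ gives
\[
LE({\mathcal{A}}_G) = 2p(p-1)|Z(G)| = 2\cdot 2\cdot 1\cdot 4 = 16
\]
uniformly, so no graph-theoretic computation is needed beyond what has already been carried out there. The entire content of the proof is therefore the structural claim $\frac{G}{Z(G)} \cong \mathbb{Z}_2 \times \mathbb{Z}_2$ for each of the six groups.

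To organize this, I would first record the general principle that for a non-abelian group of order $16$ the center is a nontrivial $2$-subgroup and $\frac{G}{Z(G)}$ is non-cyclic, so $|Z(G)| \in \{2, 4\}$; proving $|Z(G)| = 4$ then automatically forces $\frac{G}{Z(G)} \cong \mathbb{Z}_2 \times \mathbb{Z}_2$. I would then treat the six groups in turn. For (i) and (ii) the direct-product structure gives $Z(\mathbb{Z}_2 \times D_8) = \mathbb{Z}_2 \times Z(D_8)$ and $Z(\mathbb{Z}_2 \times Q_8) = \mathbb{Z}_2 \times Z(Q_8)$, each of order $4$, with quotient $D_8/Z(D_8) \cong Q_8/Z(Q_8) \cong \mathbb{Z}_2 \times \mathbb{Z}_2$. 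For (iii), the relation $bab = a^5$ makes $a^k$ central exactly when $4k \equiv 0 \pmod 8$, so $Z(M_{16}) = \langle a^2 \rangle$ has order $4$. For (iv), $bab^{-1} = a^{-1}$ forces both $a^2$ and $b^2$ into the center, giving $Z = \langle a^2, b^2 \rangle$ of order $4$.

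The remaining two cases are where the real work sits, and this is the main obstacle: verifying the center for the less transparent presentations (v) and (vi). For the central product $D_8 * \mathbb{Z}_4$ one must track the amalgamation of $Z(D_8)$ with the order-$2$ subgroup of $\mathbb{Z}_4$ and check that the resulting center is precisely the $\mathbb{Z}_4$ factor; for $SG(16, 3)$ one must manipulate the defining relations directly to locate all central elements and confirm a central subgroup of order $4$. In each case the outcome is $|Z(G)| = 4$ and $\frac{G}{Z(G)} \cong \mathbb{Z}_2 \times \mathbb{Z}_2$, after which the uniform invocation of Theorem~\ref{main2} completes the argument.
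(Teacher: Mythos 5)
Your proposal is correct and follows essentially the same route as the paper: the paper's proof likewise observes that each group has $|G| = 16$ and $|Z(G)| = 4$, concludes $\frac{G}{Z(G)} \cong {\mathbb{Z}}_2 \times {\mathbb{Z}}_2$, and invokes Theorem~\ref{main2} with $p = 2$. The only difference is that you verify the center computations case by case (correctly, including the general observation that $|Z(G)| = 4$ forces the Klein four quotient), whereas the paper simply asserts $|Z(G)| = 4$ for all six groups.
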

\begin{proof}
If $G$ is isomorphic to any of the above listed  groups, then $|G| = 16$ and $|Z(G)| = 4$. Therefore, $\frac{G}{Z(G)} \cong {\mathbb{Z}}_2 \times {\mathbb{Z}}_2$. Thus the result follows from Theorem~\ref{main2}.
\end{proof}

Recall that  genus of a graph is the smallest non-negative integer $n$ such that the graph can be embedded on the surface obtained by attaching $n$ handles to a sphere. A graph is said to be planar  if the genus of the graph is zero. We conclude this paper with the following result.

\begin{theorem}
Let $\Gamma_G$ be the commuting graph of a finite non-abelian  group $G$.  If  $\Gamma_G$  is planar   then
\[
LE({\mathcal{A}}_G)  = \frac{28}{3} \text{ or } 9.
\]
\end{theorem}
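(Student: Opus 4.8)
The plan is to convert the planarity of $\Gamma_G = \overline{\mathcal{A}}_G$ into a structural restriction on $G$ and then read off the value of $LE(\mathcal{A}_G)$ from Corollary~\ref{main005}. The key observation, consistent with every computation in this paper, is that the commuting graph is built from the centralizers of non-central elements: each abelian subgroup $A$ of $G$ induces a clique on $A \setminus Z(G)$ of size $|A| - |Z(G)|$, and in the cases treated above $\overline{\mathcal{A}}_G$ is in fact a disjoint union $\bigsqcup_i K_{n_i}$ of complete graphs with $n_i = |C_G(x_i)| - |Z(G)|$. Since a disjoint union of complete graphs is planar exactly when every block satisfies $n_i \le 4$, and since $K_5$ is a Kuratowski obstruction, planarity of $\Gamma_G$ forces $|A| - |Z(G)| \le 4$ for every maximal abelian subgroup $A$ of $G$.

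First I would use this clique bound, together with a $K_5$/$K_{3,3}$ analysis (or an appeal to the classification of finite non-abelian groups with planar commuting graph), to pin down the isomorphism type of $G$. The bound on centralizer orders restricts $\frac{G}{Z(G)}$ to the dihedral quotients handled by Theorem~\ref{main4}, and only the two smallest survive: the admissible groups are $G \cong D_6 = S_3$ and $G \cong D_8$.

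With this list in hand, the computation is immediate from Corollary~\ref{main005}. For $G \cong D_6$ we are in the odd case with $m = 3$, so $LE(\mathcal{A}_G) = m^2 = 9$; for $G \cong D_8$ we are in the even case with $m = 4$, so $LE(\mathcal{A}_G) = \frac{m(m^2 - 3m + 3)}{m - 1} = \frac{28}{3}$. These are precisely the two asserted values, so the result follows.

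The main obstacle is the first step, namely showing that planarity admits no groups beyond $D_6$ and $D_8$. The clique bound $|A| - |Z(G)| \le 4$ is necessary but not by itself sufficient, since a disjoint union of copies of $K_3$ and $K_4$ is still planar; hence the delicate part is to verify that no other centralizer configuration — in particular none producing several $K_3$ or $K_4$ blocks — can occur under the hypotheses, and that the center cannot be enlarged so as to create additional admissible types. This case-exclusion, rather than the final spectral arithmetic, is where essentially all of the work lies.
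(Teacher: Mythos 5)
Your closing arithmetic is fine, but the reduction from planarity to a list of groups --- which you yourself identify as ``where essentially all of the work lies'' --- is both left undone and, as set up, unworkable, and your list is wrong. The paper does not prove the classification; it simply cites \cite[Proposition 2.3]{AF14}, which says that planarity forces $G \cong S_3$, $D_8$ \emph{or} $Q_8$, and then reads off the values from Corollary \ref{main005} and Corollary \ref{q4m}. Your list omits $Q_8$, which genuinely occurs (its commuting graph is planar, and $Q_8 \not\cong D_8$); you land on the correct pair of values only by luck, because Corollary \ref{q4m} with $m = 2$ happens to give $LE({\mathcal{A}}_{Q_8}) = \frac{28}{3}$ as well. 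So even granting your unproved classification step, the case analysis as stated (``only the two smallest dihedral quotients survive'') is false.

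More seriously, your planarity criterion cannot yield the classification at all. If $\Gamma_G$ is taken to be the paper's $\overline{{\mathcal{A}}}_G$, with vertex set $G \setminus Z(G)$, then for the groups in this paper $\Gamma_G$ is a disjoint union of cliques $K_{|C_G(x)| - |Z(G)|}$, and planarity is exactly your bound ``every block is at most $K_4$'' --- but that bound admits many groups violating the conclusion: $D_{10}$ gives $\overline{{\mathcal{A}}}_G = K_4 \sqcup 5K_1$, planar, with $LE({\mathcal{A}}_{D_{10}}) = 25$ by Corollary \ref{main005}, and ${\mathbb{Z}}_2 \times D_8$ gives $3K_4$, planar, with $LE({\mathcal{A}}_G) = 16$ by Proposition \ref{order16}. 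Hence no case-exclusion \emph{within your framework} can recover the list $\{S_3, D_8, Q_8\}$; the classification in \cite{AF14} is for a commuting graph that includes central elements, where for instance $\langle a \rangle \leq D_{10}$ already produces a $K_5$, and the correct clique bound is $|A| \leq 4$ for maximal abelian subgroups $A \supseteq Z(G)$, not $|A| - |Z(G)| \leq 4$. In short: the one nontrivial step of your proof is missing, and the necessary condition you propose to carry it out is derived from the wrong graph, so the gap is not merely unfilled but unfillable as formulated.
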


\begin{proof}
From \cite[Proposition 2.3]{AF14}, if ${\mathcal{A}}_G$ is planner then $G \cong S_3, D_8 \text{ or } Q_8$. From Corollary \ref{main005} and Corollary \ref{q4m}, if $G\cong D_8 \text{ or } Q_8$ then $LE({\mathcal{A}}_G) = \frac{28}{3}$ and if $G \cong S_3$ then $LE({\mathcal{A}}_G) = 9$.
\end{proof}

\end{document}